\title[Curvature behavior at the first singular time of the Ricci flow]{Remarks on curvature behavior at the first singular time of the Ricci flow}
\author{Nam Q.  Le}
\address{Department of
Mathematics, Columbia University, New York,
 USA}
\email{namle@math.columbia.edu}
\author{Natasa Sesum$^{*}$}
\address{Department of Mathematics, 
University of Pennsylvania, Philadelphia, PA,
USA}
\email{natasas@math.upenn.edu}
\thanks{$*:$ Partially supported
by NSF grant 0905749}
\newcommand{\review}[2][\right]{\relax
\ifx#1\right\relax \left.\fi#2#1\rvert}
\let\abs=\envert
 \newtheorem{definition}{Definition}[section]
\newtheorem{theorem}{Theorem}[section]
\newtheorem{propo}{Proposition}[section]
\newtheorem{remark}{Remark}[section]
\newtheorem{cor}{Corollary}[section]
\newtheorem{lemma}{Lemma}[section]
\newcommand{\bef}{\begin{flushright}}
\newcommand{\eef}{\end{flushright}}
\newcommand{\eval}[2][\right]{\relax
\ifx#1\right\relax \left.\fi#2#1\rvert}
\let\abs=\envert
\numberwithin{equation}{section}
\let\norm=\enVert
\newcommand\e{\varepsilon}
\newcommand{\h}{\hspace*{.24in}}
\newcommand{\vol}{\mathrm{vol}}
\def\h{\hspace*{.24in}}
\def\beq{\begin{eqnarray*}}
\def\eeq{\end{eqnarray*}}
\def\r{\rho}
\def\ra{\rightarrow}
\newcommand{\ric}{\mathrm{Ric}}
\newcommand{\rem}{\mathrm{Rm}}
\def\RR{\mbox{$I\hspace{-.06in}R$}}
\def\m{\mu}
\begin{document}
\maketitle
\author
\pagenumbering{arabic}

\begin{abstract}
In this paper, we study curvature behavior at the first singular time of solution to the Ricci flow on a smooth, compact n-dimensional 
Riemannian manifold $M$, $\frac{\partial}{\partial t}g_{ij} = -2R_{ij}$ for $t\in [0,T)$.
If the flow has uniformly bounded scalar curvature and develops Type I 
singularities at $T$,  using Perelman's $\mathcal{W}$-functional, we show that suitable blow-ups of our evolving metrics converge in the pointed
Cheeger-Gromov sense to a
Gaussian shrinker. If the flow has uniformly bounded scalar curvature and develops Type II 
singularities at $T$, we show that suitable scalings of the potential functions in Perelman's entropy functional converge to a
positive constant on a complete, Ricci flat manifold. 
We also show that if the scalar 
curvature is uniformly bounded along the flow in certain  integral sense then the flow either develops a type II singularity at $T$ or it can be smoothly 
extended past time $T$.
\end{abstract}

\section{Introduction}
\subsection{The Ricci flow and previous results}
Let $M$ be a smooth, compact n-dimensional Riemannian manifold without boundary and equipped with a smooth Riemannian metric $g_{0}$ ($n\ge 3$). Let $g(t)$ ( $0\leq t <T$) be a one-parameter family of 
metrics on $M$.
The Ricci flow equation on $M$ with initial metric $g_{0}$
\begin{eqnarray}
\label{eq-RF}
\frac{\partial}{\partial t} g(t) &=& -2\ric (g(t)), \\
g(0) &=& g_0. \nonumber
\end{eqnarray}
has been introduced by Hamilton in his seminal paper \cite{Ha1}. It is a weakly parabolic system of equations whose short time existence was proved by Hamilton using the Nash-Moser
implicit function theorem in 
the same paper and after that simplified by DeTurck \cite{DT}. The goal in the analysis of (\ref{eq-RF}) is to understand the long time behavior of the flow, possible singularity 
formation or convergence of the flow in the cases when we do have a long time existence. In general, the behavior of the flow can serve to give us more
insights about the topology of the underlying manifold. One of the great successes is the resolution of the Poincar\'{e} Conjecture by Perelman. 
In order to discuss those things we have to understand what happens at the  singular time and also what the optimal conditions for having a smooth solution are.  

In \cite{Ha} Hamilton showed that if the norm of  Riemannian curvature $|\rem|(g(t))$ stays uniformly bounded in 
time, for all $t\in [0,T)$ with $T < \infty$, then we can extend the flow (\ref{eq-RF}) smoothly past time $T$. In other words, either the 
flow exists forever or the norm of Riemannian curvature blows up in finite time. 
{This result has been extended in \cite{W} and \cite{YeII},  assuming certain integral bounds on the Riemannian curvature. Namely, if
$\int_0^T\int_M |Rm|^{\alpha}\, dvol_{g(t)}\, dt \le C$, for some $\alpha \ge \frac{n+2}{2}$ then the flow can be extended smoothly past time $T$. Throughout the paper,
we denote $dvol_{g}$ the Riemannian volume density on $(M, g)$. On the other hand,
in \cite{Se} Hamilton's  extension result has been improved} by the second author and it was shown that if the norm of  Ricci curvature is uniformly bounded over a 
finite time interval $[0,T)$, then we can extend the flow smoothly past time $T$. In \cite{W} this has been improved even further. That is, if Ricci curvature is 
uniformly bounded from below and if the space-time  integral of the scalar curvature is bounded, say $\int_0^T\int_M |R|^{\alpha}\, dvol_{g(t)}\, dt \le C$ for $\alpha \ge \frac{n+2}{2}$, where $R$ is the 
scalar curvature, then Wang showed that we can extend the flow smoothly past time $T$. {The requirement on  Ricci curvature in \cite{W} is rather restrictive. Ricci flow 
does not in general preserve nonnegative Ricci curvature in dimensions $n\geq 4$.
See Knopf \cite{Knopf} for non-compact examples starting in dimension $n =4$ and B\"{o}hm and Wilking \cite{BW} for compact examples starting in dimension $n=12$. Without
assuming the boundedness from below of Ricci curvature, Ma and Cheng \cite{MC} proved that the norm of  Riemannian curvature can be controlled provided that
one has the integral bounds on the scalar curvature $R$ and the Weyl tensor $W$ from the orthogonal decomposition of the Riemannian curvature tensor. Their bounds are of
the form $\int_0^T\int_M (|R|^{\alpha}+ \abs{W}^{\alpha})\, dvol_{g(t)}\, dt \le C$, for some $\alpha \ge \frac{n+2}{2}$.
}In \cite{Zh} it has been proved that the scalar curvature controls the K\"ahler Ricci flow $\frac{\partial}{\partial t} g_{i\bar{j}} = -R_{i\bar{j}} - g_{i\bar{j}}$ 
starting from any K\"ahler metric $g_0$. 
\subsection{Main results} The above results, in particular that in \cite{Zh},  support the belief that 
the scalar curvature should control the Ricci flow in the Riemannian setting as well.  In \cite{EMT}, Enders, M\"{u}ller and Topping justified this belief
for Type I Ricci flow, that is, they proved the following theorem.

\begin{theorem}[Enders, M\"uller, Topping]
\label{thm-main}
Let $M$ be a smooth, compact n-dimensional Riemannian manifold equipped with a smooth Riemannian metric $g_{0}$ and $g(\cdot,t)$ be a solution to the  Type I Ricci 
flow equation (\ref{eq-RF}) on $M$. Assume there is a constant $C$ so that $\sup_M |R(\cdot,t)| \le C$, for all $t\in [0,T)$ and $T < \infty$. Then  we can 
extend the flow past time $T$.
\end{theorem}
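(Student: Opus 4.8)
I would argue by contradiction. Suppose the flow (\ref{eq-RF}) cannot be extended smoothly past $T$; then by Hamilton's criterion \cite{Ha} we have $\limsup_{t\uparrow T}\sup_M|\rem|(\cdot,t)=+\infty$, and combined with the standard lower bound $\sup_M|\rem|(\cdot,t)\ge c_n/(T-t)$ at a finite singular time, the Type I hypothesis $\sup_M|\rem|(\cdot,t)\le C_0/(T-t)$ yields $\sup_M|\rem|(\cdot,t)\asymp (T-t)^{-1}$ as $t\uparrow T$. The plan is to produce a singularity model by a Type I parabolic rescaling, identify it as a \emph{nontrivial} gradient shrinking Ricci soliton, and then observe that this is incompatible with the uniform scalar curvature bound.

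First I would set up the blow-up. Since $T<\infty$, Perelman's $\mathcal W$-entropy $\mu(g(t),T-t)$ is monotone nondecreasing in $t$ and bounded above (indeed $\mu\le 0$), hence converges to a finite limit $\bar\mu\le 0$; moreover Perelman's no-local-collapsing theorem holds uniformly on $[0,T)$. Using the Type I bound together with noncollapsing one locates a point $p\in M$ at which curvature genuinely concentrates in the Type I sense, $\limsup_{t\uparrow T}(T-t)|\rem|(p,t)=:c>0$. Choosing $t_i\uparrow T$ realizing this, put $\lambda_i:=(T-t_i)^{-1}\to\infty$ and $g_i(s):=\lambda_i\,g(t_i+s/\lambda_i)$. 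The Type I bound rescales to $|\rem_{g_i}|(\cdot,s)\le C_0/(1-s)$, so for $s\le 0$ the curvatures are uniformly bounded, and noncollapsing gives a uniform injectivity radius lower bound at $p$; Hamilton's compactness theorem then produces a subsequential pointed Cheeger--Gromov limit $(M_\infty,g_\infty(s),p_\infty)$, a complete ancient Ricci flow, with $|\rem_{g_\infty}|(p_\infty,0)\ge c>0$, so the model is not flat.

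The crucial structural step is to show $(M_\infty,g_\infty)$ is a gradient shrinking soliton, and here I would run Perelman's $\mathcal W$-functional through the rescaling. Since $\mu$ is scaling invariant, $\mu(g_i(s),1-s)=\mu\bigl(g(t_i+s/\lambda_i),\,T-(t_i+s/\lambda_i)\bigr)\to\bar\mu$ for every fixed $s$, so along the limit flow the entropy $\mu(g_\infty(s),1-s)$ is constant, equal to $\bar\mu$; by the equality case in Perelman's monotonicity formula, $g_\infty(s)$ must be self-similar, i.e.\ there is a potential $f_\infty$ with $\ric_{g_\infty}+\nabla^2 f_\infty=\tfrac12 g_\infty$ at $s=0$. (Passing the $\mathcal W$-minimizers $f_i$ to the limit, together with the uniform estimates on the $f_i$ needed to do so on a possibly noncompact limit, is the technical heart of this step; this parallels the blow-up analysis of Naber and of Enders, M\"uller and Topping \cite{EMT}, which instead uses Perelman's reduced volume.) Now I invoke the scalar curvature hypothesis: from $R_{g_i}(\cdot,s)=\lambda_i^{-1}R_g(\cdot,t_i+s/\lambda_i)$ and $\sup_M|R|\le C$ on $[0,T)$ one gets $R_{g_\infty}\equiv 0$, identically in space and in time on the limit flow; the evolution equation $\partial_s R=\Delta R+2|\ric|^2$ then forces $\ric_{g_\infty}\equiv 0$, hence $\nabla^2 f_\infty=\tfrac12 g_\infty$, and by Tashiro's theorem a complete Riemannian manifold carrying a function whose Hessian is a positive constant multiple of the metric is isometric to Euclidean $\mathbb R^n$. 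Thus $\rem_{g_\infty}\equiv 0$, contradicting $|\rem_{g_\infty}|(p_\infty,0)\ge c>0$. Therefore $\sup_M|\rem|(\cdot,t)$ stays bounded as $t\uparrow T$, and by \cite{Ha} the flow extends past $T$.

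The step I expect to be the main obstacle is the middle one: upgrading the ancient Type I model to a \emph{nontrivial} gradient shrinking soliton. This splits into (i) producing a genuine curvature-concentration point $p$; (ii) the compactness and a priori estimates needed to take Cheeger--Gromov limits of the $\mathcal W$-minimizers on a possibly noncompact limit without losing entropy at infinity; and (iii) the rigidity in Perelman's entropy monotonicity. These are precisely the points where the Type I hypothesis and no-local-collapsing are used in an essential way; once the model is known to be a nontrivial shrinker, the role of the uniform scalar curvature bound is the short elementary argument given above, namely that a gradient shrinking soliton with vanishing scalar curvature is Ricci-flat and hence flat Euclidean space.
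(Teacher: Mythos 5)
Your proposed route diverges from the paper's in the most delicate step, and the divergence is precisely where a gap appears. The paper does not prove this theorem via Perelman's $\mathcal W$-functional: it cites Enders--M\"uller--Topping and observes that the statement is a special case of its Theorem~\ref{int-bound} (take $\alpha=\infty$). The proof of Theorem~\ref{int-bound} invokes as a black box the EMT result (the paper's Theorem~\ref{EMT-thm}): for any Type~I singular Ricci flow, a parabolic rescaling subconverges to a \emph{nontrivial} gradient shrinking soliton in canonical form. That nontriviality is obtained by EMT via Perelman's reduced distance, reduced volume, and the pseudolocality theorem --- not via the $\mathcal W$-entropy. Once the nontrivial soliton is in hand, the remaining steps you give (rescaled $R\to 0$, hence $R_\infty\equiv 0$; the evolution $\partial_t R=\Delta R+2|\mathrm{Ric}|^2$ forces $\mathrm{Ric}_\infty\equiv 0$; the soliton equation then reads $\nabla^2 f_\infty=\tfrac{1}{2\tau}g_\infty$ and the limit is Euclidean by Tashiro/Ni; contradiction with nontriviality) are correct and match the paper's reasoning exactly.

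The gap is in what you flagged as the ``technical heart'': getting the \emph{nonflat} gradient shrinking soliton in the blow-up limit by running the $\mathcal W$-functional, i.e., by passing the minimizers $f_i$ (equivalently $\phi_i=e^{-f_i/2}$) to a nondegenerate limit based at the curvature concentration points. The paper itself states this is unresolved: ``The open question is whether using Perelman's $\mathcal W$-functional, one can produce in the limit a singularity model (\emph{nonflat} gradient shrinking Ricci solitons).'' The concrete obstruction is visible in the paper's Lemma~\ref{lem-lower-phi}: the uniform lower bound on the minimizers $\phi_i$ is established only near points $q_i$ where $\phi_i$ attains its maximum, and these are not, a priori, the points $p_i$ at which the curvature concentrates. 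If you base the rescaling at the $p_i$, the factor $e^{-f_i}$ may degenerate there and the limiting integral soliton identity~(\ref{eq-soliton}) becomes vacuous on a neighborhood of $p_\infty$, so you cannot conclude $\nabla^2 f_\infty=\tfrac{1}{2\tau}g_\infty$ where you need it. You also skate past the existence of a fixed Type~I essential point $p$ with $\limsup_{t\uparrow T}(T-t)|\mathrm{Rm}|(p,t)>0$; this is itself a nontrivial consequence of pseudolocality in the Type~I setting (it is part of what EMT establish), not an automatic consequence of the Type~I bound together with noncollapsing. So as written, your argument either needs the EMT machinery (reduced distance/volume and pseudolocality) imported at exactly the step you were trying to replace, or it needs a genuinely new estimate on the $\mathcal W$-minimizers at curvature concentration points.
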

Their proof was based on a blow-up argument using Perelman's reduced distance and pseudolocality theorem. \\
\h Assume the flow (\ref{eq-RF}) develops a singularity at $T < \infty$.
Throughout the paper, we use the following
\begin{definition}
 We say that (\ref{eq-RF}) has a Type I singularity at $T$  if there exists a constant $C>0$ such that for all $t\in [0, T)$
\begin{equation}
\max_{M} \abs{Rm(\cdot, t)}\cdot (T-t) \leq C.
 \label{TypeI-flow}
 \end{equation}
Otherwise we say the flow develops Type II singularity at $T$. Moreover, the flow that satisfies (\ref{TypeI-flow}) will be referred to as to the {\bf Type I Ricci flow.}
\end{definition}

\h In this paper, we also use a blow-up argument to study
curvature behavior at the first singular time of the Ricci flow. We deal with both Type I and II singularities. Assume that the scalar curvature is uniformly
bounded along the flow. If the flow develops Type I 
singularities at some finite time $T$ then by using Perelman's entropy functional $\mathcal{W}$, we show that suitable blow-ups of our evolving metrics converge in the pointed
Cheeger-Gromov sense to a
Gaussian shrinker.
\begin{theorem}
\label{prop-typeI}
Let $M$ be a smooth, compact $n$-dimensional Riemannian manifold ($n\geq 3$) and $g(\cdot,t)$ be a solution to the Ricci flow equation (\ref{eq-RF}) on $M$. Assume 
there is a constant $C$ so that $\sup_M |R(\cdot,t)| \le C$, for all $t\in [0,T)$ and $T < \infty$. 
Assume that at $T$ we have a type I singularity and the norm of the curvature operator blows up.
Then by suitable rescalings of our metrics, we get a Gaussian shrinker in the limit.
\end{theorem}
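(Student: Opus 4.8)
The plan is to perform a Type I parabolic rescaling at points where $|\rem|$ attains its spatial maximum, extract a limiting ancient Ricci flow by Hamilton's compactness theorem, and then use the monotonicity and the equality (rigidity) case of Perelman's $\mathcal W$-functional --- in place of the reduced distance used in \cite{EMT} --- to recognize the limit as a shrinking gradient Ricci soliton. The scalar curvature bound will then force this soliton to be Ricci flat, hence, together with its soliton potential, to be the Gaussian shrinker on $\mathbb R^n$.

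\emph{The blow-up.} Choose $t_i\nearrow T$ and $p_i\in M$ with $Q_i:=|\rem|(p_i,t_i)=\max_M|\rem(\cdot,t_i)|\to\infty$, and set $g_i(t)=Q_i\,g\big(t_i+Q_i^{-1}t\big)$, pointed at $p_i$, on $[-Q_it_i,\,Q_i(T-t_i))$. The Type I bound \eqref{TypeI-flow} gives $Q_i(T-t_i)\le C$, and the standard doubling-time estimate for $\sup_M|\rem|$ applied at time $t_i$ gives $Q_i(T-t_i)\ge c_0>0$; after passing to a subsequence $Q_i(T-t_i)\to\omega\in[c_0,C]$, and on each $[-A,\omega-\delta]$ the bound \eqref{TypeI-flow} yields a uniform curvature bound for $g_i$, while $|\rem_{g_i}|(p_i,0)=1$. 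Perelman's no-local-collapsing theorem for the flow on $[0,T)$ is scale invariant, so the $g_i$ are uniformly $\kappa$-noncollapsed, which together with the curvature bound gives a lower bound on the injectivity radius at $p_i$. Hamilton's compactness theorem then extracts a subsequential limit: a complete, $\kappa$-noncollapsed ancient Ricci flow $(M_\infty,g_\infty(t),p_\infty)$ on $(-\infty,\omega)$ with bounded curvature on compact time intervals and $|\rem_{g_\infty}|(p_\infty,0)=1$.

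\emph{From bounded scalar curvature to a Ricci flat shrinker.} Since $R_{g_i}(\cdot,t)=Q_i^{-1}R_g(\cdot,t_i+Q_i^{-1}t)$ and $|R_g|\le C$, we have $|R_{g_i}|\le C/Q_i\to0$, so $R_{g_\infty}\equiv0$ on $M_\infty\times(-\infty,\omega)$; substituting into $\partial_tR=\Delta R+2|\ric|^2$ forces $\ric_{g_\infty}\equiv0$. To obtain the soliton structure we invoke the $\mathcal W$-functional: by Perelman's monotonicity and the normalization $\lim_{\tau\to0^+}\mu(g,\tau)=0$, the map $t\mapsto\mu(g(t),T-t)$ is non-decreasing and $\le0$ on $[0,T)$, hence converges to a finite number $\bar\mu$. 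By the scaling invariance $\mu(\lambda g,\lambda\tau)=\mu(g,\tau)$, for each fixed $t$ one has $\mu\big(g_i(t),\,Q_i(T-t_i)-t\big)=\mu\big(g(t_i+Q_i^{-1}t),\,T-t_i-Q_i^{-1}t\big)\to\bar\mu$. The curvature bound and $\kappa$-noncollapsing give a uniform logarithmic Sobolev inequality along the $g_i$, hence uniform bounds and exponential decay for the $\mathcal W$-minimizers $f_i$ on $M$; these subconverge to a minimizer $f_\infty(t)$ of $\mu(g_\infty(t),\omega-t)$, so that $\mu(g_\infty(t),\omega-t)\equiv\bar\mu$. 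Invoking the equality case of Perelman's monotonicity formula --- whose integrand is a nonnegative multiple of $\int_{M_\infty}\big|\ric+\nabla^2f_\infty-\tfrac{1}{2(\omega-t)}g_\infty\big|^2(4\pi(\omega-t))^{-n/2}e^{-f_\infty}\,d\vol_{g_\infty}$ --- then gives
\[
\ric_{g_\infty}+\nabla^2f_\infty=\frac{1}{2(\omega-t)}\,g_\infty .
\]

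\emph{Identification, and the main obstacle.} Combining $\ric_{g_\infty}\equiv0$ with the last identity gives $\nabla^2f_\infty=\tfrac{1}{2(\omega-t)}g_\infty$ on the complete Ricci flat manifold $M_\infty$; the standard rigidity for such a potential (the integral curves of $\nabla f_\infty$ are reparametrized geodesics, and the Hessian identity then forces $g_\infty$ to be Euclidean in geodesic polar coordinates about the critical point of $f_\infty$) shows $(M_\infty,g_\infty,f_\infty)$ is the Gaussian shrinker on $\mathbb R^n$, with $f_\infty=|x|^2/\big(4(\omega-t)\big)+\mathrm{const}$. We note in passing that this is incompatible with $|\rem_{g_\infty}|(p_\infty,0)=1$, so the argument also reproves, now through the $\mathcal W$-functional, that a flow with uniformly bounded scalar curvature cannot develop a Type I singularity (cf.\ Theorem~\ref{thm-main}). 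The main obstacle in carrying this out is the passage of the $\mathcal W$-monotonicity to the noncompact limit: one must rule out escape of mass of the minimizers $f_i$ to spatial infinity and verify that $\bar\mu$ is attained on $M_\infty$. This is precisely where the uniform Sobolev and logarithmic Sobolev estimates --- themselves consequences of the Type I curvature bound and Perelman's non-collapsing --- are needed; alternatively, one may obtain the integrand inequality in the limit by lower semicontinuity and Fatou's lemma, which requires only uniform upper bounds on the $f_i$.
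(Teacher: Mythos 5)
Your high-level strategy coincides with the paper's: Type I rescaling, Cheeger--Gromov compactness, $R(g_i)\to 0$ forces Ricci flatness, and then the rigidity/equality case of the $\mathcal W$-monotonicity identifies the limit as a gradient shrinking soliton, hence Gaussian. But there are two genuine gaps. First, you center the blow-up at the maximal-curvature points $p_i$ and insist $|\rem_{g_\infty}|(p_\infty,0)=1$ throughout; since a Gaussian shrinker is flat, this is not a theorem but a contradiction, and you indeed observe this ``in passing.'' The paper avoids the contradiction by \emph{re-centering}: after extracting a limit it passes to base points $q_i$ at which the $\mathcal W$-minimizer $\phi_i=e^{-f_i/2}$ achieves its spatial maximum. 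It is only around these $q_i$ that one can prove the estimates needed to pass the soliton equation to the limit, and the conclusion of the theorem is a statement about the $q_i$-centered limit $(M_\infty,g_\infty,q_\infty)$, not the $p_i$-centered one. Whether the $p_i$-centered argument can be pushed through to give a nonflat limit (and hence reprove the Enders--M\"uller--Topping nonexistence result via $\mathcal W$) is explicitly flagged by the paper as an open problem, so your ``also reproves Theorem~\ref{thm-main}'' claim is not justified by what you wrote.

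Second, and more substantially, the entire analytic core of the paper --- Lemmas~\ref{lem-W12}, \ref{lem-C1alpha}, and \ref{lem-lower-phi} --- is compressed in your proposal into ``one must rule out escape of mass \dots this is precisely where the uniform Sobolev and log-Sobolev estimates are needed; alternatively, one may obtain the integrand inequality in the limit by Fatou's lemma.'' That is the hard part of the proof, not a remark to make about it. Concretely, the paper obtains: (i) a uniform $W^{1,2}$ bound on the normalized minimizers $F_i$ by combining Jensen's inequality for the logarithm with the Hebey--Vaugon Sobolev inequality (whose constants are uniform by Perelman $\kappa$-noncollapsing plus the rescaled curvature bounds); (ii) uniform $C^{1,\alpha}$ bounds by $L^p$ bootstrap in the equation for $\phi_i$, noting $F\mapsto F\log F$ loses arbitrarily little integrability; and (iii) a uniform \emph{positive lower bound} for $\phi_i$ on compact sets around $q_i$, proved by a contradiction argument using Rothaus-type unique continuation on the limit, seeded by the maximum-principle bound $\min_M f_i\le C$ (i.e.\ $\max_M \phi_i\ge\delta$). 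Your Fatou-based alternative only gives an inequality for the soliton integrand and does not produce a smooth potential $f_\infty$ unless $\phi_\infty>0$, which is exactly (iii); and (iii) is not a formal consequence of upper bounds on $f_i$ at a single point. Without these three lemmas the identification $\ric_\infty+\nabla^2 f_\infty=\tfrac{1}{2(a-r_0)}g_\infty$ and the subsequent Euclidean rigidity (Ni, Prop.~1.1) cannot be carried out.
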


A simple consequence of the proof of previous theorem is following result, which is the same to the one proved by Naber in \cite{Na}. The  difference is 
that instead of the reduced distance techniques used by Naber, we use Perelman's monotone functional $\mathcal{W}$.

\begin{cor}
Let $M$ be a smooth, compact $n$-dimensional Riemannian manifold ($n\geq 3$) and $g(\cdot,t)$ be a solution to the Ricci flow equation (\ref{eq-RF}) on $M$. If the flow has type I singularity at $T$, then a suitable rescaling of the solution converges to a gradient shrinking Ricci soliton. 
\end{cor}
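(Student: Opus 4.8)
The plan is to run the same blow-up argument as in the proof of Theorem \ref{prop-typeI}, but without invoking the hypothesis that the scalar curvature stays bounded. Under the Type I assumption \eqref{TypeI-flow} alone, we pick a sequence of times $t_i \nearrow T$ and points $p_i \in M$ (for instance realizing the maximum of $|Rm|$ at time $t_i$, or any sequence of "essential" points in the sense of Enders--M\"uller--Topping), set $Q_i = |Rm|(p_i,t_i)$, and define the rescaled flows
\[
g_i(t) = Q_i\, g\!\left(p_i,\; t_i + \tfrac{t}{Q_i}\right),
\]
which are defined on time intervals whose past endpoints tend to $-\infty$ because of Type I. The Type I bound guarantees a uniform curvature bound $|Rm|_{g_i} \le C$ on each fixed parabolic cylinder, and Perelman's no-local-collapsing theorem gives a uniform injectivity radius lower bound at $(p_i,0)$. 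Hence by Hamilton's compactness theorem for the Ricci flow, a subsequence converges in the pointed Cheeger--Gromov $C^\infty$ sense to a complete ancient Ricci flow $(M_\infty, g_\infty(t), p_\infty)$, $t \in (-\infty, 0]$, with bounded curvature on compact time intervals.

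The key step is to upgrade this limit ancient solution to a \emph{gradient shrinking soliton}. For this I would use the monotonicity of Perelman's $\mathcal{W}$-entropy (equivalently, the monotonicity of the reduced volume), exactly as in the proof of Theorem \ref{prop-typeI}: evaluating $\mathcal{W}$ along the original flow with parameter $\tau = T - t$, one finds that $\mu(g(t), T-t)$ is monotone and bounded (the Type I bound provides the lower bound on $\mu$ that prevents it from escaping to $-\infty$), hence has a limit as $t \to T$. Pulling the corresponding minimizing potential functions $f_i$ back under the rescalings and passing to the limit, the vanishing of the time-derivative of $\mathcal{W}$ in the limit forces the limiting potential $f_\infty$ to satisfy the shrinking soliton equation
\[
\ric(g_\infty) + \nabla^2 f_\infty - \tfrac{1}{2\tau}\, g_\infty = 0
\]
on $(M_\infty, g_\infty(-\tau))$. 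This is the standard rigidity case of Perelman's monotonicity formula, and it is precisely the mechanism already carried out in the proof of the theorem; the only change is that we no longer conclude $f_\infty$ is constant (which is what, combined with bounded scalar curvature, gave the Gaussian shrinker), so we are left with a genuine, possibly nontrivial, gradient shrinking Ricci soliton.

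The main obstacle — and the reason this is stated as a corollary rather than being immediate — is making sure the entropy argument still closes \emph{without} the scalar-curvature bound. Specifically one must check: (i) that the pointed limit is nonflat/nontrivial and noncollapsed, which again follows from the normalization $|Rm|_{g_i}(p_i,0)=1$ together with Perelman's $\kappa$-noncollapsing (valid on any finite-time Ricci flow on a compact manifold, independent of scalar curvature bounds); (ii) that the minimizers $f_i$ of $\mathcal{W}$ at the relevant scales converge, with their gradients and Hessians, to $f_\infty$ — this requires the standard elliptic estimates for the $f_i$-equation on the converging cylinders, using only the local curvature bounds from Type I; and (iii) that the limiting $\tau$ is strictly positive, which follows from the choice of basepoints $t_i$ so that $Q_i (T - t_i)$ stays bounded away from $0$ (possible precisely by the Type I hypothesis). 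Once these points are in place, the conclusion is identical to Naber's, obtained here via $\mathcal{W}$ rather than the reduced distance.
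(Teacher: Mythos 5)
Your proposal matches the paper's intent: the authors present this corollary as ``a simple consequence of the proof of the previous theorem,'' meaning one re-runs the Type~I blow-up argument of Theorem~\ref{prop-typeI} verbatim without the scalar-curvature hypothesis. All the ingredients survive --- the bounded curvature on the rescaled flows, Perelman $\kappa$-noncollapsing, Hamilton compactness, the curvature gap estimate ensuring $a = \lim Q_i(T-t_i) \geq 1/8$, the $W^{1,2}$ estimate of Lemma~\ref{lem-W12} (which only uses $|R(g_i)| \le C$, now supplied by the Type~I bound rather than the pointwise scalar bound), and the lower bound on $\phi_i$ via the maximum principle applied to~(\ref{eq-f10}) (which only requires the lower bound~(\ref{lower-scar}) on $R$, valid for any compact Ricci flow). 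The monotonicity gap then forces the soliton equation in the limit, and one simply stops there rather than concluding flatness.

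One point of confusion worth correcting: you write that the bounded scalar curvature hypothesis of Theorem~\ref{prop-typeI} makes $f_\infty$ \emph{constant}, and that this gives the Gaussian shrinker. That is not what happens there. The bound $|R(\cdot,t)| \le C$ gives $|R(g_i)| \le C/Q_i \to 0$, hence $R_\infty \equiv 0$; the evolution equation for $R_\infty$ then forces $\ric_\infty \equiv 0$; substituted into the soliton identity this gives $g_\infty = 2(a - r_0)\nabla\nabla f_\infty$, which by Ni's result makes $(M_\infty,g_\infty)$ flat Euclidean space with the \emph{quadratic} potential $f_\infty = |x|^2/(4(a-r_0))$. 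The constant-potential conclusion you are thinking of belongs to the Type~II theorem, under the different normalization $\tilde\phi_i = \phi_i/\beta_i$. The difference between the theorem and the corollary is solely the loss of $\ric_\infty \equiv 0$, as the substance of your argument already correctly captures.
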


We also have the following consequence.

\begin{cor}
\label{cor-vol}
Let $M$ be a smooth, compact $n$-dimensional Riemannian manifold ($n\geq 3$) and $g(\cdot,t)$ be a Type I solution to the Ricci flow equation (\ref{eq-RF}) on $M$. There exists a $\delta > 0$ so that if $|R|(g(\cdot,t)) \le C$ for all $t\in [0,T)$, then $\vol_{g(t)}(M) \ge \delta$, for all $t \in [0,T)$.
\end{cor}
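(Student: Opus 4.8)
The plan is to read off the volume bound from the lower bound on Perelman's $\mu$-invariant that underlies the proof of Theorem \ref{prop-typeI}. Fix any $\sigma>0$. Along the solution of (\ref{eq-RF}) on the closed manifold $M$, set $\tau(t)=\sigma+T-t$, so that $\dot\tau=-1$; Perelman's monotonicity of the $\mathcal{W}$-entropy then says that $t\mapsto \mu\big(g(t),\sigma+T-t\big)$ is non-decreasing on $[0,T)$ (for each $t\in[0,T)$ we have $\sigma+T-t>\sigma>0$, so the quantities are well defined). Since $M$ is compact, $\mu(g_0,\sigma+T)$ is a finite real number, and hence
\[
\mu\big(g(t),\sigma+T-t\big)\ \ge\ \mu(g_0,\sigma+T)\ =:\ -A\qquad\text{for all }t\in[0,T).
\]

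Next I would turn this into a volume estimate by testing the $\mathcal{W}$-functional against a constant function. If $f\equiv c$ is the constant fixed by the normalization $(4\pi\tau)^{-n/2}e^{-c}\,\vol_{g}(M)=1$, then $\nabla f=0$ and a direct computation gives
\[
\mu(g,\tau)\ \le\ \mathcal{W}(g,f,\tau)\ =\ \tau\,\bar R_{g}\ -\ \tfrac n2\log(4\pi\tau)\ +\ \log\vol_{g}(M)\ -\ n,
\]
where $\bar R_g$ denotes the average of the scalar curvature over $(M,g)$. Equivalently,
\[
\log\vol_{g}(M)\ \ge\ \mu(g,\tau)\ -\ \tau\,\bar R_{g}\ +\ \tfrac n2\log(4\pi\tau)\ +\ n .
\]

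Applying the last inequality at $g=g(t)$ with $\tau=\sigma+T-t$ and using the three facts $\mu(g(t),\sigma+T-t)\ge -A$, then $|\bar R_{g(t)}|\le\sup_M|R(\cdot,t)|\le C$ together with $0<\sigma+T-t\le\sigma+T$, and finally $\sigma+T-t>\sigma$ (so that $\tfrac n2\log(4\pi\tau)\ge \tfrac n2\log(4\pi\sigma)$), every term on the right-hand side is controlled independently of $t$, and we obtain
\[
\log\vol_{g(t)}(M)\ \ge\ -A\ -\ (\sigma+T)C\ +\ \tfrac n2\log(4\pi\sigma)\ +\ n\ =:\ \log\delta .
\]
Thus $\vol_{g(t)}(M)\ge\delta>0$ for all $t\in[0,T)$, with $\delta$ depending only on $n$, $C$, $T$ and $g_0$.

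There is no serious obstacle here; the one point deserving a word is that $\mu(g_0,\sigma+T)>-\infty$, which holds because $M$ is compact (logarithmic Sobolev inequality). I would also remark that the estimate admits a much more elementary proof independent of the entropy: from $R\le C$ one has $\frac{d}{dt}\vol_{g(t)}(M)=-\int_M R\,dvol_{g(t)}\ge -C\,\vol_{g(t)}(M)$, whence $\vol_{g(t)}(M)\ge\vol_{g_0}(M)\,e^{-CT}$. The entropy proof is the one consistent with the methods of this paper, and it exhibits the volume lower bound as a direct shadow of the lower bound on $\mu$ used in the proof of Theorem \ref{prop-typeI}.
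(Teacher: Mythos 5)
Your proof is correct, and it takes a genuinely different route from the paper's. The paper deduces the volume bound from the Gaussian shrinker produced in Theorem \ref{prop-typeI} together with Perelman's pseudolocality theorem: after rescaling by $Q_j$ and identifying the pointed limit as flat Euclidean space, the pseudolocality theorem yields uniform curvature bounds on a fixed ball about $q_{j_0}$ for all times near $T$, which makes all such metrics uniformly equivalent on that ball and hence gives a positive lower volume bound. Your approach bypasses the blow-up entirely: you introduce the shifted parameter $\tau(t)=\sigma+T-t$ so that $\mu(g(t),\tau(t))$ is monotone and bounded below by the finite quantity $\mu(g_0,\sigma+T)$, then test $\mathcal{W}$ with the constant function and read off $\log\vol_{g(t)}(M)\ge \mu - \tau\bar R + \tfrac{n}{2}\log(4\pi\tau) + n$, all of whose right-hand terms are controlled using $|\bar R_{g(t)}|\le C$ and $\sigma<\tau\le\sigma+T$. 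The computation of $\mathcal{W}$ against a constant $f$ is correct, as is the claim that the time-shifted $\mu$ is monotone.

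Two further observations are worth making explicit. First, as you note at the end, the statement admits an even more elementary proof: by (\ref{eq-vol}), $\tfrac{d}{dt}\vol_{g(t)}(M)=-\int_M R\,dvol_{g(t)}\ge -C\,\vol_{g(t)}(M)$, so $\vol_{g(t)}(M)\ge\vol_{g_0}(M)\,e^{-CT}$; here only the upper bound $R\le C$ is used. Second, and this is the interesting discrepancy with the paper: neither your entropy argument nor this Gronwall argument uses the Type I hypothesis at all, whereas the paper's pseudolocality argument is inseparable from the Gaussian-shrinker limit (which requires Type I). So what the paper's longer route buys is a more geometric picture (curvature control on a definite ball near the singular time), while your route buys a simpler and more general statement. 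Both are valid; your proposal is a clean alternative that is arguably better matched to the modest content of the corollary as stated.
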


In \cite{Na} it has been proved that in the case of type I singularity, a suitable rescaling of the flow converges to 
gradient shrinking Ricci soliton. In \cite{EMT}, it has been recently showed that the limiting soliton represents a singularity model, that is, it is nonflat. The open 
question is whether using Perelman's $\mathcal{W}$-functional, one can produce in the limit a singularity model ({\it nonflat} gradient shrinking Ricci solitons). We prove some interesting estimates on the minimizers of Perelman's $\mathcal{W}$-functional which can be of independent interest.
 
On the other hand, if the flow develops Type II 
singularities at some finite time $T$, then we show that suitable scalings of the potential functions in Perelman's entropy functional converge to a
positive constant on a complete, Ricci flat manifold which is the pointed Cheeger-Gromov limit of a suitably chosen sequence of 
blow-ups of our original evolving metrics. 
\begin{theorem}
Let $M$ be a smooth, compact $n$-dimensional Riemannian manifold ($n\geq 3$) and $g(\cdot,t)$ be a solution to the Ricci flow equation (\ref{eq-RF}) on $M$. Assume 
there is a constant $C$ so that $\sup_M |R(\cdot,t)| \le C$, for all $t\in [0,T)$ and $T < \infty$. 
Assume that at $T$ we have a type II singularity and the norm of the curvature operator blows up. Let $\phi_{i}$ be as in the proof of Theorem \ref{prop-typeI} (see
, e.g, (\ref{eq-min})).
Then by suitable rescalings of our metrics and $\phi_{i}$, we get as a limit of $\phi_{i}$ a positive constant on a complete, Ricci flat manifold.
\label{lem-typeII}
\end{theorem}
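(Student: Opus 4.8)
The plan is to mimic the structure of the proof of Theorem \ref{prop-typeI}, using Perelman's $\mathcal{W}$-functional and its minimizers, but now exploiting the slower-than-Type-I blow-up rate to kill the curvature in the limit while keeping the potential functions under control. Recall the setup: let $K_i = \max_{M\times[0,t_i]}|Rm|(\cdot,t)$ where $t_i \uparrow T$ are chosen so that $K_i$ is essentially attained at $(p_i, t_i)$, and form the rescaled flows $g_i(t) = K_i\, g(t_i + t/K_i)$ on a time interval $[-t_i K_i, (T-t_i)K_i)$. Since the singularity is Type II, along a suitable subsequence one has $(T - t_i)K_i \to \infty$, so the rescaled flows exist on time intervals whose lengths tend to infinity in both directions, and $|Rm|(g_i) \le 1$ with $|Rm|(g_i)(p_i, 0) = 1$ (or $\to 1$). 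Perelman's no-local-collapsing theorem (which survives rescaling) gives a uniform injectivity radius lower bound at $(p_i,0)$, so by Hamilton's compactness theorem $(M, g_i(t), p_i)$ subconverges in the pointed Cheeger--Gromov sense to an eternal solution $(M_\infty, g_\infty(t), p_\infty)$ with $|Rm|_{g_\infty} \le 1$ everywhere and $|Rm|_{g_\infty}(p_\infty, 0) = 1$.

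The second step is the curvature-killing argument, identical in spirit to the Type I case. The hypothesis $\sup_M |R(\cdot,t)| \le C$ means the \emph{scalar} curvature of the original flow is uniformly bounded, so under rescaling $R(g_i) = K_i^{-1} R(g(t_i + \cdot)) \to 0$ uniformly. Hence the limit $g_\infty(t)$ is scalar-flat: $R_{g_\infty} \equiv 0$. On an eternal solution this forces much more. First I would invoke the evolution inequality $\partial_t R \ge \Delta R + \frac{2}{n}R^2$ together with the strong maximum principle / Harnack considerations — or more directly, since $R_{g_\infty}\equiv 0$ and $\partial_t R = \Delta R + 2|Ric|^2$, we get $|Ric|_{g_\infty} \equiv 0$, i.e. the limit is Ricci flat and therefore \emph{static}: $g_\infty(t) \equiv g_\infty$ is a fixed complete Ricci-flat metric with $|Rm|_{g_\infty} \le 1$ and $|Rm|_{g_\infty}(p_\infty) = 1$ (so it is nonflat).

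The third step tracks the potential functions $\phi_i$ — the (square roots of the) minimizers of $\mathcal{W}(g_i(0), \cdot, \tau_i)$ for the appropriate scale $\tau_i$, normalized as in (\ref{eq-min}). In the Type I proof one shows (via the uniform scalar curvature bound, the entropy monotonicity of Perelman, and the uniform non-collapsing) that the $\phi_i$ satisfy uniform local $C^k$ estimates and subconverge to a limiting minimizer $\phi_\infty$ solving the shrinker equation on $(M_\infty, g_\infty)$. Here the same estimates apply: the minimizer equation for $\phi_i$ is a fixed semilinear elliptic equation whose coefficients are controlled by $|Rm|(g_i)\le 1$ and the uniformly bounded $\mu$-entropy; I would recycle those a priori bounds verbatim to pass to a limit $\phi_\infty > 0$. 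Now feed $R_{g_\infty}=0$, $Ric_{g_\infty}=0$ into the limiting equation: the shrinker-type equation $2\Delta_{g_\infty}\phi_\infty - |\nabla\phi_\infty|^2 + R_{g_\infty} + \frac{\phi_\infty - n}{\tau_\infty} = $ (const) — or whichever normalization (\ref{eq-min}) uses — degenerates because the scale $\tau_i \to \infty$ under Type II blow-up, so the $1/\tau$ term drops out, leaving a Liouville-type equation $2\Delta_{g_\infty}\phi_\infty = |\nabla \phi_\infty|^2$ (equivalently $\Delta_{g_\infty} e^{-\phi_\infty/2}=0$, or the gradient-soliton identity $\nabla^2 \phi_\infty = 0$ coming from Ricci-flatness) on a complete manifold, together with the normalization $\int e^{-\phi_\infty}(4\pi\tau_\infty)^{-n/2}\,dvol = 1$ surviving in an appropriate sense. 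Completeness and the integrability/normalization then force $\phi_\infty$ to be constant.

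The main obstacle I expect is precisely this last convergence-and-identification step for $\phi_i$: one must be careful that the natural scale parameter $\tau_i$ in Perelman's $\mathcal{W}$ really does go to infinity (this is where Type II, as opposed to Type I, is used) and that after rescaling the $\phi_i$ do not concentrate or escape — i.e. one needs a definite lower bound on $e^{-\phi_i}$ near $p_i$ and uniform gradient bounds so that the limit is a genuine positive function rather than $0$ or $+\infty$. This is exactly the content of the "interesting estimates on the minimizers of Perelman's $\mathcal{W}$-functional" advertised in the introduction, and it is where the uniform scalar curvature bound does real work (controlling the potential via the minimizer equation and the logarithmic Sobolev inequality). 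Once those estimates are in hand, concluding that a positive harmonic-type function on a complete Ricci-flat manifold with the right integrability is constant is routine, and the Ricci-flatness and nonflatness of $(M_\infty, g_\infty)$ follow as above from the scalar curvature bound plus the eternal-solution structure. Finally, I should double check the degenerate case where the curvature operator does \emph{not} blow up, but that is excluded by hypothesis, so the blow-up scales $K_i$ genuinely tend to infinity and the argument goes through.
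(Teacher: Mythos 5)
Your high-level plan is close to the paper's --- blow up \`a la Hamilton, kill scalar and hence Ricci curvature in the limit, then pass the Euler--Lagrange equation for the $\mathcal{W}$-minimizer to the limit and show the limit of the $\phi_i$ is constant --- and you correctly flag that the scale $\tau_i=\alpha_i-s\to\infty$ is the Type II mechanism that degenerates the equation. But the proposal leaves a genuine gap at exactly the step you yourself identify as the "main obstacle," and the ``suitable rescalings of \ldots $\phi_i$'' in the statement is not a cosmetic detail but the whole point.

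Concretely: without any renormalization, the minimizers $\phi_i$ satisfy the constraint $\int_M\phi_i^2\,dvol_{g_i(0)}=(4\pi\alpha_i)^{n/2}\to\infty$, so there is no reason for $\phi_i$ to subconverge to anything finite or nonzero, and the bootstrapped $C^{1,\alpha}$ bounds from the Type I proof blow up with $\alpha_i$ (cf.\ the factor $(Q_i(T-t_i)-r_i)^{n/4}$ in~(\ref{eq-phii100})). The paper fixes this by setting $\beta_i:=\max_M(\phi_i+|\nabla_{g_i(0)}\phi_i|)$, working with $\tilde\phi_i:=\phi_i/\beta_i$, and --- crucially --- re-centering the Cheeger--Gromov limit at the points $q_i$ where that maximum is achieved, not at the Hamilton blow-up points $p_i$; otherwise $\tilde\phi_\infty$ could simply vanish near the basepoint. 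The heart of the argument is then the two-sided control $-C_1\le\log\beta_i\le C_2\log\alpha_i+C_2$ (upper bound from Lemma~\ref{lem-W12}/Hebey--Vaugon/elliptic $L^p$; lower bound from the maximum principle applied to the $f_i$-equation together with the uniform scalar-curvature lower bound~(\ref{lower-scar}) and the uniform $\mu$-bound from Lemma~\ref{lem-mu}), which gives $\log\beta_i/\alpha_i\to 0$ and lets one send \emph{all} the zeroth-order terms in~(\ref{eq-phii}) to zero, leaving $\Delta_{g_\infty}\tilde\phi_\infty=0$. The conclusion is then via Yau's theorem that a nonnegative harmonic function on a complete Ricci-flat manifold is constant, and the normalization $\max(\tilde\phi_i+|\nabla\tilde\phi_i|)=1$ at $q_i$ forces that constant to be $1$. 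None of these ingredients (the $\beta_i$-normalization, the basepoint shift, the $\log\beta_i/\alpha_i\to 0$ estimate, Yau's Liouville theorem, or the identification of the constant as $1$) appear in your write-up; you state the needed lower/gradient bounds as a to-do and then call the remainder routine. Also, a smaller but real error: you cannot run the Type~I ``soliton-quantity $\to 0$'' argument here and have the $g/2\tau$ term ``drop out,'' because in Type~II the prefactor $2\tau(4\pi\tau)^{-n/2}$ in~(\ref{eq-monotone}) itself tends to zero (for $n\ge3$), so the entropy monotonicity becomes vacuous rather than yielding an equation; the paper therefore does not use the evolving soliton identity at all in the Type~II case but works directly with the static Euler--Lagrange equation~(\ref{eq-u}).
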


We believe that previous theorem may play a role in proving the nonexistence of type II singularities if the scalar curvature is uniformly bounded along the flow. We are still investigating that.

For a precise definition of $\phi_{i}$, see Section \ref{sec-main}.

{
There has been a striking analogy between the Ricci flow and the mean curvature flow for decades now. About the same time when Hamilton proved that 
the norm of the Riemannian curvature under the Ricci flow must blow up at a finite singular time, Huisken \cite{Hu} showed that the norm of  the second fundamental form of an 
evolving hypersurface under the mean curvature flow must blow up at a finite singular time.  In \cite{LS1} the authors showed that the analogue of Wang's result holds for the mean curvature 
flow as well, namely if the second fundamental form of an evolving hypersurface is uniformly bounded from below and if the  mean curvature is bounded in certain integral sense, then we can 
smoothly extend the flow. In the follow-up paper \cite{LS2} the authors show that if one only has the uniform bound on the mean curvature of the evolving 
hypersurface, then the flow either 
develops a type II singularity or can be smoothly extended. In the case the dimension of the evolving hypersurfaces is two they show that under some density assumptions 
one can smoothly extend the flow provided that the mean curvature is uniformly bounded. {Finally, we note that, in contrast to  
the lower bound on the scalar curvature (\ref{lower-scar}), at the first
singular time of the mean curvature flow, the mean curvature can either tend to $\infty$ (as in the case of a round sphere) or $-\infty$  as
in some examples of Type II singularities \cite{AV}.}

If we replace the pointwise scalar curvature bound in Theorem \ref{thm-main} with an integral bound we can prove the following theorems.

\begin{theorem}
If $g(\cdot,t)$ solves (\ref{eq-RF})  and if
\begin{equation}
\int_{M} \abs{R}^{\alpha} (t) dvol_{g(t)} \leq C_{\alpha} 
\label{int-alpha}
\end{equation}
for all $t\in [0, T)$ where $\alpha>n/2$ and $T < \infty$, then either the flow develops a type II singularity at $T$ or the flow can be smoothly extended past time $T$.
 \label{int-bound}
\end{theorem}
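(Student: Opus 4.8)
The plan is to argue by contradiction via a Type I parabolic blow-up, importing from Naber \cite{Na} and Enders--M\"uller--Topping \cite{EMT} that the limiting model is a \emph{nonflat} shrinking gradient Ricci soliton, and then observing that the subcritical bound (\ref{int-alpha}) forces that model to be scalar-flat --- hence flat --- a contradiction.

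Suppose the conclusion fails: the flow does not extend past $T$, and the singularity at $T$ is not of Type II. By Hamilton's extension theorem, non-extendability gives $\max_M\abs{Rm}(\cdot,t)\to\infty$ as $t\to T^-$, while ``not Type II'' means $\max_M\abs{Rm}(\cdot,t)\,(T-t)\le C$ on $[0,T)$; thus $g(\cdot,t)$ is a Type I Ricci flow whose curvature genuinely blows up at $T$. First I would fix an essential blow-up sequence $(p_i,t_i)$, i.e. $t_i\to T$ with $\abs{Rm}(p_i,t_i)\ge c\,(T-t_i)^{-1}$ for some fixed $c>0$, which exists by the Type I bound together with $\max_M\abs{Rm}\to\infty$. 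With $\lambda_i=(T-t_i)^{-1}\to\infty$, set
\[
g_i(t)=\lambda_i\,g\!\left(T+\lambda_i^{-1}t\right),\qquad t\in[-\lambda_iT,0),
\]
pointed at $p_i$. The Type I bound yields $\abs{Rm}_{g_i(t)}\le C/\abs{t}$ uniformly on compact subsets of $(-\infty,0)$, and Perelman's no-local-collapsing theorem gives a uniform positive lower bound for the injectivity radius at $p_i$; hence, by Hamilton's compactness theorem, a subsequence of $(M,g_i(t),p_i)$ converges in the pointed $C^\infty_{\mathrm{loc}}$ (Cheeger--Gromov) sense to a complete ancient solution $(M_\infty,g_\infty(t),p_\infty)$. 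By \cite{Na} this limit is a nontrivial shrinking gradient Ricci soliton, and by \cite{EMT} it is \emph{nonflat}.

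The contribution of hypothesis (\ref{int-alpha}) is then a single scaling computation. Since $R_{g_i(t)}=\lambda_i^{-1}R_{g(T+\lambda_i^{-1}t)}$ and $dvol_{g_i(t)}=\lambda_i^{n/2}\,dvol_{g(T+\lambda_i^{-1}t)}$, for each fixed $t<0$ and all $i$ large (so that $T+\lambda_i^{-1}t\in[0,T)$),
\[
\int_M\abs{R_{g_i(t)}}^{\alpha}\,dvol_{g_i(t)}=\lambda_i^{\frac n2-\alpha}\int_M\abs{R_{g(T+\lambda_i^{-1}t)}}^{\alpha}\,dvol_{g(T+\lambda_i^{-1}t)}\le\lambda_i^{\frac n2-\alpha}C_\alpha\longrightarrow0,
\]
precisely because $\alpha>n/2$. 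Passing to the limit on every compact subset of $M_\infty$, using the $C^\infty_{\mathrm{loc}}$ convergence of metrics and curvatures, gives $\int_{M_\infty}\abs{R_\infty}^{\alpha}\,dvol_{g_\infty}=0$, hence $R_\infty\equiv0$. But a complete shrinking gradient Ricci soliton has $R\ge0$, and $R\equiv0$ forces, via the soliton identities, $\ric\equiv0$ and then $\nabla^2 f=(2\tau)^{-1}g$; such a manifold is the flat Gaussian shrinker, contradicting nonflatness. Therefore the assumed situation cannot occur, which proves the theorem.

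I expect the main obstacle to be the second paragraph: that the blow-up limit is a \emph{nonflat} shrinking soliton is the genuinely hard input --- the soliton structure comes from \cite{Na}, and nonflatness from \cite{EMT} (which uses Perelman's pseudolocality theorem) --- and none of it follows from (\ref{int-alpha}). The integral hypothesis enters only through the elementary subcritical scaling above, and it is exactly the strict inequality $\alpha>n/2$ that makes the rescaled $L^{\alpha}$-mass of the scalar curvature tend to zero.
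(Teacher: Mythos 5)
Your proposal is correct and follows essentially the same route as the paper: rule out Type II, perform a Type I parabolic blow-up to a \emph{nontrivial} gradient shrinking soliton (the paper invokes Enders--M\"uller--Topping's Theorem~1.4 directly, which packages the compactness, soliton structure and nonflatness that you assemble from Naber and EMT separately), use the subcritical scaling $\lambda^{n/2-\alpha}\to 0$ with $\alpha>n/2$ to force the limit to be scalar flat, and conclude flatness of the limiting soliton to reach a contradiction. The only inessential variation is in the last step, where the paper derives Ricci-flatness from the evolution equation $\partial_t R=\Delta R+2\abs{\ric}^2$ and the soliton equation, while you appeal to the $R\ge 0$ rigidity and the soliton identities.
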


\begin{remark}
\label{rem1}
 The condition on $\alpha$ in Theorem \ref{int-bound} is optimal. Let $(S^{n}, g_{0})$ be the space form of constant sectional curvature 1. The Ricci flow on $M=S^{n}$
with initial metric $g_{0}$ has the solution $g(t) = (1- 2(n-1)t) g_{0}$. Therefore $T=\frac{1}{2(n-1)}$ is the maximal existence time. We can rewrite
$g(t) = 2(n-1) (T-t) g_{0}$ and compute
\begin{eqnarray*}
 \int_{M} \abs{R}^{\alpha}(t) dvol_{g(t)} = vol_{g(t)}(M) (\frac{n}{2(T-t)})^{\alpha} &=& vol_{g(0)}(M) \left(2(n-1)(T-t)\right)^{n/2}(\frac{n}{2(T-t)})^{\alpha}\\
&=& vol_{g(0)} (M) 2^{n/2-\alpha} (n-1)^{n/2}n^{\alpha} \frac{1}{(T-t)^{\alpha-n/2}}.
\end{eqnarray*}
Hence $\int_{M} \abs{R}^{\alpha} (t) dvol_{g(t)}$ tends to $\infty$ as $t\rightarrow T$ if and only if $\alpha>n/2$.
\end{remark}

\begin{theorem}
If $g(\cdot,t)$ is as above, then if we have the following space-time integral bound,
\begin{equation}
\int_{0}^{T}\int_{M} \abs{R}^{\alpha} (t) dvol_{g(t)} dt\leq C_{\alpha} 
\label{int-st}
\end{equation}
for $\alpha\geq \frac{n +2}{2}$, then the flow either develops a type II singularity at $T$  or can be smoothly extended past time $T$.
 \label{int-bound1}
\end{theorem}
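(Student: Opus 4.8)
The strategy is to argue by contradiction. Suppose the flow \emph{cannot} be smoothly extended past $T$, so that $T<\infty$ is the maximal time of existence and, by Hamilton's criterion, $\max_M|Rm|(\cdot,t)\to\infty$ as $t\nearrow T$; suppose moreover that the singularity at $T$ is \emph{not} of Type II, i.e. it is of Type I in the sense of our Definition. I want to contradict the space-time bound (\ref{int-st}). Fix a Type I singular point $p\in M$ (such points exist, by a reduced-volume argument) and a sequence $\lambda_j\to\infty$, and consider the parabolically rescaled flows
\[
g_j(s):=\lambda_j\,g\!\left(T+\lambda_j^{-1}s\right),\qquad s\in[-\lambda_j T,0).
\]
By the Type I structure theory of Naber \cite{Na} and Enders--M\"uller--Topping \cite{EMT}, after passing to a subsequence $(M,g_j(s),p)$ converges in the pointed Cheeger--Gromov sense on $(-\infty,0)$ to a complete, $\kappa$-noncollapsed, \emph{non-flat} gradient shrinking Ricci soliton $(M_\infty,g_\infty(s),p_\infty)$. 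In particular $R_{g_\infty}\not\equiv 0$ on $M_\infty\times(-\infty,0)$, since a complete non-flat gradient shrinking soliton cannot be scalar-flat (if $R\equiv 0$, the shrinker identity $\Delta_f R=R-2|\mathrm{Ric}|^2$ forces $\mathrm{Ric}\equiv 0$, and a complete Ricci-flat gradient shrinker is flat).

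The point at which the hypothesis $\alpha\ge\frac{n+2}{2}$ enters is the scaling bookkeeping. From $R_{g_j}(s)=\lambda_j^{-1}R_g(T+\lambda_j^{-1}s)$, $dvol_{g_j(s)}=\lambda_j^{n/2}\,dvol_{g(T+\lambda_j^{-1}s)}$ and $ds=\lambda_j\,dt$, we get, for any $0<\e<A$,
\[
\int_{-A}^{-\e}\!\!\int_M |R_{g_j}(s)|^{\alpha}\,dvol_{g_j(s)}\,ds
=\lambda_j^{\,\frac n2+1-\alpha}\int_{T-A/\lambda_j}^{\,T-\e/\lambda_j}\!\!\int_M |R_g(t)|^{\alpha}\,dvol_{g(t)}\,dt .
\]
Because $\alpha\ge\frac{n+2}{2}$ the exponent satisfies $\frac n2+1-\alpha\le 0$, so $\lambda_j^{\frac n2+1-\alpha}\le 1$ for $\lambda_j\ge 1$; and since $T-A/\lambda_j\to T$, the remaining time integral is a tail of the finite quantity in (\ref{int-st}) and hence tends to $0$. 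Thus $\int_{-A}^{-\e}\int_M |R_{g_j}(s)|^{\alpha}\,dvol_{g_j(s)}\,ds\to 0$ as $j\to\infty$, for every $0<\e<A<\infty$.

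Now pass to the limit. Cheeger--Gromov convergence of Ricci flows is smooth on compact subsets, so $R_{g_j}\to R_{g_\infty}$ in $C^\infty_{\mathrm{loc}}$ on $M_\infty\times(-\infty,0)$. Hence, for every compact $K\subset M_\infty$ and every $0<\e<A$, Fatou's lemma gives
\[
\int_{-A}^{-\e}\!\!\int_K |R_{g_\infty}(s)|^{\alpha}\,dvol_{g_\infty(s)}\,ds
\le\liminf_{j\to\infty}\int_{-A}^{-\e}\!\!\int_M |R_{g_j}(s)|^{\alpha}\,dvol_{g_j(s)}\,ds=0 .
\]
Letting $K\uparrow M_\infty$, $A\uparrow\infty$ and $\e\downarrow 0$ forces $R_{g_\infty}\equiv 0$ on $M_\infty\times(-\infty,0)$, contradicting the fact that $R_{g_\infty}\not\equiv 0$ established above. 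Therefore the two standing assumptions are incompatible: if the flow does not extend smoothly past $T$, its singularity at $T$ must be of Type II, which is the assertion of the theorem.

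The step I expect to be the main obstacle is the passage to the limit: one must be sure that the \emph{smallness} of the rescaled space-time scalar-curvature integrals is genuinely inherited by the Cheeger--Gromov limit, even though $M_\infty$ may be non-compact and the basepoints are being carried along the rescaling. Since we only need the pointwise conclusion $R_{g_\infty}\equiv 0$, exhausting $M_\infty$ by compact sets on which the convergence is smooth suffices, so the difficulty is mainly one of bookkeeping rather than a genuine analytic obstruction; the two further points that need care are quoting \cite{Na,EMT} with a normalization compatible with the rescaling above, and recalling the standard fact that non-flat complete shrinking solitons are not scalar-flat. The same scheme, with the weight computation $\int_M |R_{g_j}(s)|^{\alpha}\,dvol_{g_j(s)}=\lambda_j^{n/2-\alpha}\int_M |R_g|^{\alpha}\,dvol_{g}\to 0$ when $\alpha>n/2$, proves Theorem \ref{int-bound} in the same way.
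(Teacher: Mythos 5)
Your proof is correct and follows essentially the same route as the paper's: argue by contradiction, invoke Enders--M\"uller--Topping's Type I structure theorem to get a nontrivial gradient shrinking soliton as a blow-up limit, and use the scaling behavior of the space-time integral for $\alpha\ge\frac{n+2}{2}$ to force the limit to be scalar-flat, hence Ricci-flat, hence Euclidean, contradicting nontriviality. The only cosmetic difference is that the paper first reduces to $\alpha=\frac{n+2}{2}$ by H\"older so that the integral is exactly scale-invariant, while you keep $\alpha$ general and note the rescaling factor $\lambda_j^{\,n/2+1-\alpha}\le 1$.
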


\begin{remark}
The condition on $\alpha$ in Theorem \ref{int-bound1} is optimal . As in Remark \ref{rem1} consider the Ricci flow on the round sphere. Following 
the computation in Remark \ref{rem1} we get
$$\int_0^T\int_M |R|^{\alpha}\, dvol_{g(t)} dt = vol_{g(0)}(M)2^{n/2-\alpha} (n-1)^{n/2}n^{\alpha} \int_0^T\frac{1}{(T-t)^{\alpha-n/2}} \, dt,$$
and therefore the integral is  $\infty$ if and only if $\alpha \ge \frac{n+2}{2}$.
\end{remark}
\h For the mean curvature flow, similar results to Theorem \ref{int-bound1} have been obtained by the authors \cite{LS2}.

The rest of the paper is organized as follows. In Section \ref{preli} we will give some necessary preliminaries. Section \ref{sec-main} is 
devoted to the statements and proofs of Theorems \ref{prop-typeI}, \ref{lem-typeII} and Corollary \ref{cor-vol}. In 
section \ref{sec-cor} we prove Theorems \ref{int-bound} and \ref{int-bound1}.
\vglue 0.1cm
\noindent

{\bf Acknowledgements}: The authors would like to thank John Lott for helpful conversations during the preparation of this paper.
\section{Preliminaries}
\label{preli}
\h In this section, we recall basic evolution equations during the Ricci flow and the definition of  singularity formation. Then we recall Perelman's entropy functional $\mathcal{W}$ and prove one of its properties concerning the $\mu$-energy, Lemma \ref{lem-mu}. The 
nonpositivity of the $\mu$-energy turns out to be 
very crucial for the proof of Theorem \ref{thm-main}.
\subsection{Evolution equations and singularity formation} Consider the Ricci flow equation (\ref{eq-RF}) on $[0, T)$. Then, the scalar curvature $R$ and the volume form $vol_{g(t)}$ evolve by the following equations
\begin{equation}
 \frac{\partial}{\partial t} R = \Delta R + 2 \abs{Ric}^2
\label{evol-scar}
\end{equation}
and
\begin{equation}
 \frac{\partial}{\partial t} vol_{g(t)} = -R vol_{g(t)}.
\label{eq-vol}
\end{equation}
Because $|\ric|^2 \ge \frac{R^2}{n}$, the maximum principle applied to (\ref{evol-scar}) yields
\begin{equation}
 R(g(t))\geq \frac{min_{M}R(g(0))}{1- \frac{2 min_{M}R(g(0)) t}{n}}.
\label{lower-scar}
\end{equation}
If $T<+\infty$ and the norm of the Riemannian curvature $\abs{Rm}(g(t))$
becomes unbounded as $t$ tends to $T$, we say the Ricci flow
develops singularities as $t$ tends to $T$ and $T$ is a singular
time. It is well-known that the Ricci flow generally develops
singularities. \\
\h If a solution $(M,g(t))$ to the Ricci flow develops singularities
at $T < +\infty$, then according to Hamilton
\cite{Ha}, we say that it develops a \mbox{\textbf{Type I
singularity}} if
$$ \ \sup_{t\in [0,T)}(T-t) \max_{M} |Rm(\cdot,t)| <+\infty,\
$$
and it develops a  \mbox{\textbf{Type II singularity}} if
$$ \ \sup_{t\in [0,T)}(T-t) \max_{M} |Rm(\cdot,t)| =+\infty.
$$
Clearly, the Ricci flow of a round sphere develops Type I singularity in finite time. The existence of type II singularities for the Ricci flow has been recently
established by Gu and Zhu \cite{GZ}, proving the degenerate neckpinch conjecture of Hamilton \cite{Ha}. \\
\h Finally, by the curvature gap estimate for Ricci flow solutions with finite time singularity (see, e.g., Lemma 8.7 in \cite{CLN}), we have
\begin{equation}
 \max_{x\in M} \abs{Rm(x,t)}\geq \frac{1}{8(T-t)}.
\label{curgap}
\end{equation}
\subsection{Perelman's entropy functional $\mathcal{W}$ and the $\mu$-energy}
In \cite{Pe} Perelman has introduced a very important functional, the entropy functional $\mathcal{W}$, for the study of the Ricci flow,
\begin{equation}
\label{eq-perelman}
\mathcal{W}(g,f,\tau) = (4\pi\tau)^{-n/2}\int_M [\tau(R + |\nabla f|^2) + f - n] e^{-f}\, dvol_{g},
\end{equation}
under the constraint $(4\pi\tau)^{-n/2}\int_M e^{-f}\, dvol_{g} = 1$. The functional $\mathcal{W}$ is invariant under the parabolic scaling of the Ricci flow and invariant
under diffeomorphism. Namely, for any positive number $\alpha$ and any diffeomorphism $\varphi$, we have 
$\mathcal{W}(\alpha\varphi^{\ast} g, \varphi^{\ast} f, \alpha \tau) = \mathcal{W}(g,f,\tau).$
Perelman showed that if $\dot{\tau} = -1$ and $f(\cdot,t)$ is a solution to the backwards heat equation 
\begin{equation}
\label{back}
\frac{\partial f}{\partial t}  = -\Delta f + |\nabla f|^2 - R + \frac{n}{2\tau},
\end{equation}
and $g(\cdot,t)$ solves the Ricci flow equation (\ref{eq-RF}) then
\begin{equation}
\label{eq-monotone}
\frac{d}{dt}\mathcal{W}(g(t),f(t),\tau) = (2\tau)\cdot (4\pi\tau)^{-n/2}\int_M |R_{ij} + \nabla_i\nabla_j f - \frac{g_{ij}}{2\tau}|^2 e^{-f}\, dvol_{g(t)} \ge 0.
\end{equation}
We see that $\mathcal{W}$ is constant on metrics $g$ with the property that
$$R_{ij} + \nabla_i\nabla_j f - \frac{g_{ij}}{2\tau} = 0,$$
for a smooth function $f$.
These metrics are called gradient shrinking Ricci solitons and appear often as singularity models, that is, limits of  blown up solutions around 
finite time singularities of the Ricci flow. \\
\h Let $g(t)$ be a solution to the Ricci flow (\ref{eq-RF}) on $(-\infty, T)$. We call a triple $(M, g(t), f(t))$ on $(-\infty, T)$ with smooth functions $
f: M\rightarrow \RR$ a {\bf gradient shrinking soliton in canonical form} if it satisfies
\begin{equation}
 \ric(g(t))  + \nabla^{g(t)} \nabla^{g(t)} f(t) - \frac{1}{2(T-t)} g(t) =0 ~~\text{and}~ \frac{\partial}{\partial t} f(t) = \abs{f(t)}^{2}_{g(t)}.
\end{equation}
\h Perelman also defines the $\mu$-energy
\begin{equation}
\mu(g,\tau) = \inf_{\{f\,|\,(4\pi\tau)^{-n/2}\int_M e^{-f}\, dvol_{g}= 1\}} \mathcal{W}(g,f,\tau),
\label{mu-energy}
\end{equation}
and shows that 
\begin{equation}
\label{mon-mu}
\frac{d}{dt}\mu(g(\cdot,t),\tau) \ge (2\tau)\cdot(4\pi\tau)^{-n/2}\int_M |R_{ij} + \nabla_i\nabla_j - \frac{g_{ij}}{2\tau}|^2 e^{-f}\, dvol_{g(t)} \ge 0,
\end{equation}
where $f(\cdot,t)$ is the minimizer for $\mathcal{W}(g(\cdot,t), f, \tau)$ with the constraint on $f$ as above.  Note that $\mu(g,\tau)$ corresponds to the best constant 
of a logarithmic Sobolev inequality.
Adjusting some of Perelman's arguments to our situation we get the following Lemma whose proof we will include below for reader's convenience.

\begin{lemma}[\it\bf Nonpositivity of the $\mu$-energy ]
\label{lem-mu}
If $g(t)$ is a solution to (\ref{eq-RF}) for all $t\in [0,T)$, then
$$\mu(g(t),T-t) \le 0, \,\,\, \mbox{for all} \,\,\, t\in[0,T).$$
\end{lemma}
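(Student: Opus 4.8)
The plan is to produce, for each fixed $t\in[0,T)$, one explicit admissible function in Perelman's functional at which $\mathcal{W}$ is already nonpositive, and then to invoke the definition of $\mu$ as an infimum. The natural candidate is the function attached to a conjugate heat kernel based at a time slightly before $T$. Concretely, fix $t_0\in(0,T)$ and a point $p\in M$. Since $t_0<T$, the solution $g(\cdot)$ is smooth on $[0,t_0]$, so the conjugate heat equation $-\partial_t u-\Delta u+Ru=0$ admits a unique positive smooth fundamental solution $u(\cdot,t)$ on $M\times[0,t_0)$ with $u(\cdot,t)\to\delta_p$ as $t\to t_0^-$. Setting $\tau=t_0-t$ and writing $u=(4\pi\tau)^{-n/2}e^{-f}$, a direct computation shows that $f$ solves the backward heat equation (\ref{back}); moreover the constraint $(4\pi\tau)^{-n/2}\int_M e^{-f}\,dvol_{g(t)}=\int_M u\,dvol_{g(t)}=1$ holds for all $t$, since the conjugate heat equation preserves the total mass (using (\ref{eq-vol})). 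Thus $f(\cdot,t)$ is admissible for $\mathcal{W}(g(t),\cdot,\tau)$, and Perelman's monotonicity identity (\ref{eq-monotone}) shows that $t\mapsto\mathcal{W}(g(t),f(\cdot,t),t_0-t)$ is nondecreasing on $[0,t_0)$.

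The crux of the argument is the limiting value $\lim_{t\to t_0^-}\mathcal{W}(g(t),f(\cdot,t),t_0-t)=0$. This is standard, but it is the only place where genuine analysis enters: because the flow is smooth on a full neighbourhood of $(p,t_0)$, the conjugate heat kernel has the usual short-time (Varadhan-type) asymptotics $u\approx(4\pi\tau)^{-n/2}e^{-d_{g(t_0)}(\cdot,p)^2/(4\tau)}$, so that in normal coordinates at $p$ it converges to the standard Euclidean Gaussian, for which one computes directly from (\ref{eq-perelman}) that $\mathcal{W}=0$; the correction terms are of order $\tau$ (up to a logarithm) and vanish as $t\to t_0^-$. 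One may also simply cite this fact. Combining it with the monotonicity above gives $\mathcal{W}(g(t),f(\cdot,t),t_0-t)\le0$ for every $t\in[0,t_0)$, whence, by the definition (\ref{mu-energy}) of $\mu$ as the infimum of $\mathcal{W}$ over admissible functions, $\mu(g(t),t_0-t)\le0$ for all $t\in[0,t_0)$.

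To finish, fix $t\in[0,T)$ and apply the previous paragraph with an arbitrary $t_0\in(t,T)$ to obtain $\mu(g(t),t_0-t)\le0$; letting $t_0\to T^-$ and using that $\tau\mapsto\mu(g(t),\tau)$ is continuous on $(0,\infty)$ for the fixed smooth metric $g(t)$, we conclude $\mu(g(t),T-t)\le0$. The main obstacle is thus the short-time entropy asymptotics of the conjugate heat kernel in the second paragraph; everything else is a direct unwinding of the formulas recalled in the Preliminaries. As a remark, basing the conjugate heat kernel directly at a point $(p,T)$ would require showing only that the limit of $\mathcal{W}$ along the flow is $\le0$ (it can in fact be strictly negative, as already happens for the shrinking round sphere) and would entail extra care where the metric degenerates at $T$; working with $t_0<T$ circumvents this entirely.
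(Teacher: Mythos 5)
Your proposal is correct and follows the same overall strategy as the paper: fix a time $t_0 < T$, run the backward heat equation (\ref{back}) from $t_0$ down to $t$, use Perelman's monotonicity (\ref{eq-monotone}) to compare $\mathcal{W}$ at the two ends, argue that the value near $t_0$ tends to $0$, and finally send $t_0 \to T$ using continuity of $\tau\mapsto\mu(g(t),\tau)$. The only substantive difference is in how the limiting equality $\lim \mathcal{W}=0$ is obtained. The paper never invokes the conjugate heat kernel: at the fixed time $\tau_0=\bar\tau-\varepsilon$ it writes down an explicit test function $f_1(x)=|x|^2/(4\varepsilon)$ in normal coordinates about a point (cut off at a radius $\rho_0$ below the injectivity radius and then shifted by a constant to enforce the normalization constraint), and computes $\mathcal{W}$ of this function at time $\tau_0$ directly as two elementary integrals $I+II$, both of which visibly tend to $0$ as $\varepsilon\to 0$; the backward heat flow is then used only to transport this explicit datum to the earlier time $t$, where monotonicity gives the upper bound. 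Your version instead bases the backward solution at a Dirac mass at $t_0$ (the conjugate heat kernel) and cites its short-time Varadhan-type asymptotics to produce the same vanishing limit. Both are legitimate and both are standard in the literature; the paper's route is self-contained and more elementary (nothing beyond Gaussian integrals), while yours outsources the hard estimate to known heat-kernel asymptotics, which is a heavier import that makes the write-up slicker. Your closing remark about why one should not try to base the construction directly at the singular time $T$ is apt and matches the reason the paper works with $\bar\tau < T - t$ and only passes to $T - t$ at the very end.
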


\begin{proof}
We are assuming the Ricci flow exists for all $t\in [0,T)$. Fix $t \in [0,T)$. Define $\tilde{g}(s) = g(t+s)$, for $s\in [0, T-t)$. Pick any $\bar{\tau} < T-t$.
Let $\tau_0=\bar{\tau}-\e$ with $\e>0$ small. Pick $p\in M$. We use normal coordinates
about $p$ on $(M,\tilde{g}(\tau_0))$ to define
\begin{equation}
f_1 (x)=
\begin{cases}
&\frac{|x|^2}{4\e} \qquad \text{if}~d_{\tilde{g}(\tau_{0})}(x,x_0)<\r_0, \\
&\frac{\r_0^2}{4\e} \qquad \text{elsewhere}
\end{cases}
\end{equation}
where $\r_0>0$ is smaller than the injectivity radius. Note that $dvol_{\tilde{g}(\tau_{0})}(x) = 1+O(|x|^2)$ near $p$.
We compute
\begin{align*}
\int_M(4\pi\e)^{-n/2}e^{-f_1}dvol_{\tilde{g}(\tau_{0})}
&=\int_{|x|\leq\r_0}(4\pi\e)^{-n/2}e^{-|x|^2/4\e}(1+O(|x|^2))dx+O(\e^{-n/2}e^{-\r_0^2/4\e}) \\
&=\int_{|y|\leq\r_0/\sqrt{\e}}(4\pi)^{-n/2}e^{-|y|^2/4}(1+O(\e|y|^2))dy
  +O(\e^{-n/2}e^{-\r_0^2/4\e})
\end{align*}
The second term goes to zero as $\e\ra 0$ while the first term converges to
$$\int_{\mathbb{R}^n}(4\pi)^{-n/2}e^{-|y|^2/4}dy=1. $$
If we write the integral as $e^{C}$, then
$C\ra 0$ as $\e\ra 0$. And $f=f_1+C$ then satisfies the constraint $\int_M(4\pi\e)^{-n/2}e^{-f}dvol_{\tilde{g}(\tau_{0})} =1 $.

We solve the equation (\ref{back}) backwards with initial value $f$ at $\tau_0$. Then
\begin{align*}
&\quad \mathcal{W}(\tilde{g}(\tau_0),f(\tau_0),\bar{\tau}-\tau_0) \\
&=\int_{|x|\leq\r_0}[\e(\frac{|x|^2}{4\e^2}+R)+\frac{|x|^2}{4\e}+C-n]
(4\pi\e)^{-n/2}e^{-|x|^2/4\e-C}(1+O(|x|^2))dx \\ 
&\quad + \int_{M-B(p,\r_0)}(\frac{\r_o^2}{4\e}+\e R+C-n)
(4\pi\e)^{-n/2}e^{-r_0^2/4\e-C} \\
&=I+II,
\end{align*}
where
$I=e^{-C}\int_{|x|\leq\r_0}(\frac{|x|^2}{2\e}-n)(4\pi\e)^{-n/2}e^{-|x|^2/4\e}(1+O(|x|^2))dx$
and $II$ contains all the remaining terms. It is obvious that $II\ra 0$ as $\e\ra 0$ while
\begin{align*}
I&=e^{-C}\int_{|y|\leq\r_0/\sqrt{\e}}(\frac{|y|^2}{2}-n)(4\pi)^{-n/2}e^{-|y|^2/4}(1+O(\e|y|^2))dy \\
&\quad \ra\int_{\mathbb{R}^n}(\frac{|y|^2}{2}-n)(4\pi)^{-n/2}e^{-|y|^2/4}dy=0~\text{as}~ \e\rightarrow 0.
\end{align*}
Therefore $\mathcal{W}(\tilde{g}(\tau_0),f(\tau_0),\bar{\tau}-\tau_0)\ra 0$ as $\tau_0\ra\bar{\tau}$.
By the monotonicity of $\mu$ along the flow,
$\m(g(t),\bar{\tau}) = \mu(\tilde{g}(0), \bar{\tau}) \leq \mathcal{W}(\tilde{g}(0),f(0),\bar{\tau})\leq \mathcal{W}(\tilde{g}(\tau_0),f(\tau_0),\bar{\tau}-\tau_0).$
Let $\tau_0\ra \bar{\tau}$, we get $\m(g(t),\bar{\tau})\leq 0$. Since $\bar{\tau} < T - t$ is arbitrary we get
$$\mu(g(t), T - t) \le 0.$$
\end{proof}

\section{Uniform bound on scalar curvature}
\label{sec-main}
In this section, we prove Theorems \ref{prop-typeI}, \ref{lem-typeII} and Corollary \ref{cor-vol}.

\begin{proof}[Proof of Theorem \ref{prop-typeI}]
By our assumptions, there exists 
a sequence of times $t_i\to T$ so that
$Q_i := \max_{M\times[0,t_i]}|\rem|(x,t) \to \infty$ as $i\to\infty$. Assume that the maximum is achieved at $(p_i, t_{i})\in M\times [0, t_i]$. 
Define a rescaled sequence of solutions
\begin{equation}
\label{eq-rescaling}
g_i(t) = Q_i\cdot g(t_i + t/Q_i).
\end{equation}
We have that 
\begin{equation}
\label{eq-cur-bound}
|\rem(g_{i})| \le 1 \,\,\, \mbox{on} \,\,\, M\times[-t_i Q_i,0] \,\,\, \mbox{and} \,\,\, |\rem (g_{i})|(p_i,0) = 1.
\end{equation}
By Hamilton's compactness theorem \cite{Ha2} and Perelman's $\kappa$-noncollapsing theorem \cite{Pe} we can extract a pointed subsequence of 
solutions $(M, g_i(t), q_i)$, 
converging in the Cheeger-Gromov sense to a solution to (\ref{eq-RF}), which we denote by $(M_{\infty}, g_{\infty}(t), q_{\infty})$, for any sequence of points $q_i
\in M$. In particular, if we take that sequence of points to be exactly $\{p_i\}$, we can guarantee the limiting metric is nonflat.  The limiting metric has 
a sequence of nice properties that we discuss below. Since 
$$|R(g_i(t))| = \frac{|R(g(t_i+\frac{t}{Q_i}))|}{Q_i} \le \frac{C}{Q_i} \to 0,$$
our limiting solution $(M_{\infty}, g_{\infty}(t))$ is scalar flat, for each $t\in (-\infty, 0]$. Since it solves the Ricci 
flow equation (\ref{eq-RF}) and $R_{\infty}:= R(g_{\infty})$ evolves by
$$\frac{\partial}{\partial t} R_{\infty} = \Delta R_{\infty} + 2\abs{\ric(g_{\infty})}^2,$$
we have that $\ric(g_{\infty}) \equiv 0$, that is, the limiting metric is Ricci flat. We will get a Gaussian shrinker by using 
Perelman's  functional $\mu$ 
defined by (\ref{mu-energy}).
Recall that (see the computation in \cite{KL}) 
$$\frac{d}{dt}\mu(g(t),\tau) \ge 2{\tau}\cdot (4\pi\tau)^{-n/2}\int_M |\ric + \nabla\nabla f - \frac{g}{2\tau}|^2 e^{-f}\, dvol_{g(t)},$$
where $f(\cdot,t)$ is the minimizer realizing $\mu(g(t),\tau)$, and $\tau = T - t$.\\
\h {\it In this Theorem \ref{prop-typeI}, we take $s, v\in [-10, 0]$ with $s<v$}. Then, by (\ref{eq-cur-bound}), $g_{i}(s)$ and $g_{i}(v)$ are defined for $i$ sufficiently large. 
Then, by the invariant property of $\mu$ under the parabolic scaling of the Ricci flow, one has, for $s<v\in [-10, 0]$
\begin{eqnarray}
\label{eq-monotone}
&  &\mu(g_i(v), Q_i(T-t_i) - v) - \mu(g_i(s), Q_i(T-t_i) - s)  \nonumber\\
&=& \mu(g(t_i + \frac{v}{Q_i}), T-t_i-\frac{v}{Q_i})
- \mu(g(t_i+\frac{s}{Q_i}),T-t_i-\frac{s}{Q_i})\nonumber\\& =& \int_{t_i+\frac{s}{Q_i}}^{t_i+\frac{v}{Q_i}} \frac{d}{dt}\mu(g(t),T-t){dt} \nonumber \\
&\ge&  \int_{t_i+\frac{s}{Q_i}}^{t_i+\frac{v}{Q_i}}{\int_{M}2\tau}(4\pi\tau)^{-n/2}\cdot |\ric + \nabla\nabla f - \frac{g}{2\tau}|^2 e^{-f}\, dvol_{g(t)} dt  \nonumber \\
&=& 2\int_s^v\int_{M}m_{i}(r)(4\pi m_{i}(r))^{-n/2}|\ric(g_i(r)) + \nabla\nabla f - \frac{g_i}{2 m_{i}(r)}|^2 e^{-f}  dvol_{g_{i}(r)} dr
\end{eqnarray}
where, for simplicity, we have denoted
\begin{equation}
 m_{i}(r) = Q_i(T-t_i) - r.
\end{equation}
Since we are assuming the flow develops a type I singularity at $T$, we have 
\begin{equation}
\label{eq-finite}
\lim_{i\to\infty} Q_i(T-t_i) = a < \infty.
\end{equation}
Thus, by (\ref{curgap}), one has for $r\in [-10, 0]$,
\begin{equation}
 \lim_{i\rightarrow\infty} m_{i}(r) = a- r> 0.
\label{gapr}
\end{equation}
By Lemma \ref{lem-mu} and by the monotonicity of $\mu(g(t),T-t)$ (see (\ref{mon-mu})), we have 
\begin{equation}
\label{eq-mu-bound}
\mu(g(0), T) \le \mu(g(t),T-t) \le 0.
\end{equation}
Estimate (\ref{eq-mu-bound}) implies that there exists a finite $\lim_{t\to T} \mu(g(t),T-t)$  which has as a consequence that the 
left hand side of (\ref{eq-monotone}) tending to zero as $i\to\infty$. Letting $i\to \infty$ in (\ref{eq-monotone}) and using (\ref{gapr}), we get
\begin{equation}
\label{eq-soliton}
\lim_{i\to\infty} \int_s^{v}\int_M { (a-r)[4\pi (a-r)]^{-n/2}}|Ric(g_i) + \nabla\nabla f - \frac{g_i}{2(a-r)}|^2 e^{-f}\, dvol_{g_{i}(r)}\, dr = 0.
\end{equation}

We would like to say that we can extract a subsequence so that $f(\cdot,t_i + \frac{r}{Q_{i}})$  converges smoothly to a smooth 
function $f_{\infty}(r)$ on $(M_{\infty}, g_{\infty}(r))$, which will then be a potential function for a limiting gradient shrinking Ricci soliton $g_{\infty}$. In order
 to do that, we need to get some uniform estimates for $f(\cdot,t_i + \frac{r}{Q_{i}} )$. 
{The equation satisfied by $f(t_{i} + \frac{r}{Q_{i}})$ in (\ref{eq-monotone}) is
\begin{equation}
\label{eq-f10}
 (T-t_{i} -\frac{r}{Q_{i}}) (2\Delta f -\abs{\nabla f}^2 + R) + f-n = \mu (g(t_{i} + \frac{r}{Q_{i}}), T- t_{i} - \frac{r}{Q_{i}}).
\end{equation}
Let $f_{i}(\cdot, r) = f(\cdot, t_{i} + \frac{r}{Q_{i}}).$ Then
\begin{equation*}
 [Q_{i}(T-t_{i})-r] (2 \Delta_{g_{i}(r)} f_i(r) - \abs{\nabla_{g_{i}(r)} f_i(r)} + R(g_i(r))) + f_i(r)-n = \mu (g_{i}(r), Q_{i}(T-t_{i})-r).
\end{equation*}
 Define $\phi_i(\cdot, r) = e^{-f_i(\cdot, r)/2}$. This function $\phi_{i}(\cdot, r)$ satisfies a nice elliptic equation
\begin{equation}
 [Q_{i}(T-t_{i})-r] (-4\Delta_{g_{i}(r)} + R(g_i(r)))\phi_i  = 2\phi_i\log \phi_i + (\mu (g_{i}(r), Q_{i}(T-t_{i})-r) + n)\phi_i.
\label{eq-min}
\end{equation}
Recall that, in this Theorem \ref{prop-typeI}, we consider $r\in [-10, 0]$. We also take the freedom to suppress certain dependences on $r$ whenever no possible confusion may arise.\\
\h Our first estimates are uniform global $W^{1,2}$ estimates for $\phi_{i}(r)$ as shown in the following
\begin{lemma}
\label{lem-W12}
There exists a uniform constant $C$ so that for all $r\in [-10, 0]$ and all $i$, one has
$$\int_M \phi_i^2(\cdot,r)\, dvol_{g_i(r)} + \int_M |\nabla_{g_i(r)}\phi_i(\cdot,r)|^2\, dvol_{g_i(r)} \le C(Q_i(T-t_i) - r)^{n/2} \le \tilde{C}.$$
\end{lemma}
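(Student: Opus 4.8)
The plan is to obtain the $W^{1,2}$ bound for $\phi_i(\cdot,r)$ directly from the normalization constraint together with the constancy of $\mathcal{W}$ at the minimizer, exploiting the uniform control of the scalar curvature and of $\mu$. Recall that $\phi_i = e^{-f_i/2}$, so the constraint $(4\pi\tau_i)^{-n/2}\int_M e^{-f_i}\,dvol = 1$ with $\tau_i = \tau_i(r) := Q_i(T-t_i) - r$ reads
\begin{equation*}
\int_M \phi_i^2(\cdot,r)\, dvol_{g_i(r)} = (4\pi\tau_i)^{n/2}.
\end{equation*}
By \eqref{eq-finite} and \eqref{gapr}, $\tau_i(r) \to a - r$ uniformly for $r \in [-10,0]$, so the right-hand side is bounded by a constant $\tilde C$ depending only on $a$ and $n$, and more precisely it is $\le C\,\tau_i^{n/2}$ with $C = (4\pi)^{n/2}$. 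This already gives the $L^2$ half of the claim with no work.

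For the gradient term I would evaluate $\mathcal{W}(g_i(r), f_i(r), \tau_i(r))$, which equals $\mu(g_i(r),\tau_i(r))$ since $f_i(r)$ is the minimizer. Rewriting $\mathcal{W}$ in terms of $\phi_i$ (using $|\nabla f|^2 e^{-f} = 4|\nabla\phi|^2$ and $f e^{-f} = -2\phi^2\log\phi$), we get
\begin{equation*}
\mu(g_i(r),\tau_i) = (4\pi\tau_i)^{-n/2}\int_M \Big[\tau_i\big(R(g_i(r))\phi_i^2 + 4|\nabla_{g_i(r)}\phi_i|^2\big) - 2\phi_i^2\log\phi_i - n\phi_i^2\Big]\,dvol_{g_i(r)}.
\end{equation*}
Solving for the gradient integral,
\begin{equation*}
4\tau_i\int_M |\nabla_{g_i(r)}\phi_i|^2\,dvol_{g_i(r)} = (4\pi\tau_i)^{n/2}\mu(g_i(r),\tau_i) - \tau_i\int_M R(g_i(r))\phi_i^2\,dvol + \int_M 2\phi_i^2\log\phi_i\,dvol + n\int_M \phi_i^2\,dvol.
\end{equation*}
Now every term on the right is controlled: $\mu(g_i(r),\tau_i) = \mu(g(t_i + r/Q_i), T - t_i - r/Q_i)$ lies between $\mu(g(0),T)$ and $0$ by \eqref{eq-mu-bound}, hence is uniformly bounded; $\tau_i \le \tilde C$; $|R(g_i(r))| = |R(g(t_i+r/Q_i))|/Q_i \le C/Q_i \to 0$ so the scalar curvature term is negligible; the $\int \phi_i^2$ term is bounded by the constraint computed above. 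The only term requiring a short argument is $\int_M \phi_i^2\log\phi_i\,dvol$: since $\phi_i^2\log\phi_i \le \phi_i^{2+\delta}$ for $\phi_i \ge 1$ and $\phi_i^2\log\phi_i$ is bounded below (by $-1/(2e)$ pointwise), one bounds it above using $\int \phi_i^{2+\delta}$, but I would avoid needing higher integrability by instead noting $\phi_i^2\log\phi_i \le \phi_i^2 \cdot \tfrac12\phi_i^{?}$ — cleaner is: for any $\eta>0$, $x\log x \le \tfrac{1}{\eta}x^{1+\eta} + C_\eta$ for $x \ge 0$, apply with $x = \phi_i^2$ and a small $\eta$, reducing to $\int \phi_i^{2+2\eta}$, then interpolate using the $\kappa$-noncollapsing / log-Sobolev inequality associated to $\mu$ — or, most simply, use the standard Jensen-type bound that for the normalized measure $\phi_i^2\,dvol/(4\pi\tau_i)^{n/2}$ one has $\int \phi_i^2\log\phi_i\,dvol \le$ a constant times the $L^2$ norm plus the entropy, which is exactly what the boundedness of $\mu$ encodes. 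Dividing by $4\tau_i$ and using $\tau_i \ge a - r \ge a > 0$ bounded away from zero (by \eqref{gapr}, enlarging $i$) gives the desired bound on $\int |\nabla \phi_i|^2$.

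The main obstacle, and the only nonroutine point, is controlling the $\int_M \phi_i^2 \log \phi_i$ term from above uniformly: a naive pointwise bound fails because $\phi_i$ can be large. I expect the resolution is precisely that the nonpositivity and monotonicity of $\mu$ from Lemma \ref{lem-mu} and \eqref{eq-mu-bound} are equivalent to a uniform logarithmic Sobolev inequality for $(M,g_i(r))$, which bounds $\int \phi_i^2\log(\phi_i^2)\,dvol$ in terms of $\int |\nabla\phi_i|^2$ with a small coefficient (absorbable into the left side) plus $\int \phi_i^2$. Concretely, one writes $\mathcal{W}(g_i(r),f_i,\tau_i) \ge \mu(g_i(r),\tau_i) \ge \mu(g(0),T)$, which after rearranging is the log-Sobolev inequality
\begin{equation*}
\int_M \phi_i^2\log\phi_i^2\,dvol \le 4\tau_i\int_M|\nabla\phi_i|^2\,dvol + \tau_i\int_M R(g_i(r))\phi_i^2\,dvol + \big(n - \mu(g(0),T)\big)\int_M\phi_i^2\,dvol,
\end{equation*}
so in fact the $\log$ term and the gradient term must be estimated \emph{together}, not separately; combining the displayed identity for $4\tau_i\int|\nabla\phi_i|^2$ with this inequality, the $\int\phi_i^2\log\phi_i$ contributions cancel up to sign and one is left with a clean bound. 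After that, everything reduces to the already-established facts $|R(g_i)| \to 0$, $\tau_i \to a-r \in [a, a+10]$, $\mu(g(0),T) \le \mu \le 0$, and $\int\phi_i^2 = (4\pi\tau_i)^{n/2}$, finishing the proof.
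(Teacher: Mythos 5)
Your handling of the $L^2$ part is fine and matches the paper: it follows immediately from the constraint $\int_M \phi_i^2\,dvol_{g_i(r)} = (4\pi m_i(r))^{n/2}$ together with the boundedness of $m_i(r)$.

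The gradient estimate, however, has a genuine gap, and you in fact flag the exact spot where it occurs. The identity you write for $4\tau_i\int|\nabla\phi_i|^2$ is precisely the statement $\mathcal{W}(g_i(r),f_i,\tau_i)=\mu(g_i(r),\tau_i)$ at the minimizer, expressed in $\phi_i$. The ``log-Sobolev inequality'' you then invoke — $\mathcal{W}(g_i(r),f_i,\tau_i)\ge\mu(g(0),T)$ rearranged — is the \emph{same} relation, now read as a one-sided inequality using $\mu(g_i,\tau_i)\ge\mu(g(0),T)$. Both express the single bounded quantity $4\tau_i\int|\nabla\phi_i|^2 - 2\int\phi_i^2\log\phi_i$; substituting one into the other yields the tautology $0\le A+B$ and gives no control of $\int|\nabla\phi_i|^2$ alone. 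The coefficient on the gradient term in the Perelman log-Sobolev inequality is exactly $4\tau_i$, the same as in your identity, so there is nothing left to absorb; the two quantities can in principle blow up together while their difference stays bounded. This is not a cancellation that helps, it is a cancellation that erases all information.

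What closes the argument in the paper is an \emph{independent} inequality with a different structure: the Hebey--Vaugon Sobolev inequality (Theorem \ref{thm-hebey}), which applies uniformly to $(M,g_i(r))$ by Perelman's $\kappa$-noncollapsing plus the curvature bound \eqref{eq-cur-bound}. Setting $F_i=\phi_i/c_i$ so that $\int F_i^2=1$, Jensen's inequality for the normalized measure $F_i^2\,dvol$ gives
$\int F_i^2\log F_i \le \frac{n-2}{4}\log\int F_i^{2n/(n-2)}$, and Hebey--Vaugon bounds $\int F_i^{2n/(n-2)}$ by $(C(n)\int|\nabla F_i|^2 + B)^{n/(n-2)}$. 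Plugging this into the energy identity yields an inequality of the form
$\int|\nabla F_i|^2 \le \frac{n-2}{8m_i}\log\bigl(C(n)(\int|\nabla F_i|^2)^{n/(n-2)}+B\bigr) + (\text{bounded})$,
which is sublinear in $\int|\nabla F_i|^2$ and therefore self-bounds. The logarithm generated by Jensen is what makes the right side grow strictly slower than the left, something the Perelman log-Sobolev inequality cannot do because it is saturated at the minimizer. Your alternative remark about $x\log x\le\eta^{-1}x^{1+\eta}+C_\eta$ followed by interpolation and Sobolev is in the right spirit and could be made to work, but you would still need the uniform Sobolev constant for $(M,g_i(r))$ — that is, the Hebey--Vaugon input — and you do not carry it out; as written, the route you commit to does not close.
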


\begin{proof}
Note that $\phi_i(r)$ satisfies the $L^{2}$-constraint
\begin{equation*}
 \int_{M}[4\pi m_{i}(r)]^{-n/2} (\phi_i(r))^2 dvol_{g_i(r)} =1
\end{equation*}
and is in fact smooth \cite{Rothaus}. Here, we have used $m_{i} (r) = Q_{i}(T-t_{i})-r$. \\

To simplify, let $F_{i}(r) = \frac{\phi_{i}(r)}{c_{i}(r)}$ where $c_{i}(r) = [4\pi m_{i}(r)]^{n/4} $. Then
\begin{equation*}
 \int_{M} (F_{i}(r))^2 dvol_{g_i(r)} =1
\end{equation*}
and the equation for $F_i(r)$ becomes
\begin{equation*}
 m_{i}(r) (-4\Delta_{g_{i}(r)} + R(g_i(r)))F_{i}(r)  = 2 F_{i}(r)\log F_{i}(r) + (\mu (g_{i}(r), m_{i}(r)) + n + 2 \log c_{i}(r))F_{i}(r).
\end{equation*}
Introduce
\begin{equation*}
 \mu_{i}(r) = \mu (g_{i}(r), m_{i}(r)) + n + 2 \log c_{i}(r).
\end{equation*}
Then
\begin{equation*}
 -\Delta_{g_{i}(r)} F_{i} = \frac{1}{2 m_{i}(r)} F_{i}\log F_{i} + (\frac{\mu_{i}(r)}{4 m_{i}(r)} - \frac{1}{4}R(g_{i}(r))) F_{i}.
\end{equation*}
Multiplying the above equation by $F_{i}(r)$ and integrating over $M$, we get
\begin{multline}
 \int_{M} \abs{\nabla_{g_{i}} F_{i}}^2 dvol_{g_i(r)} = \frac{1}{2 m_{i}(r)} \int_{M}F^2_{i}\log F_{i} dvol_{g_i(r)}\\ + 
\int_{M}(\frac{\mu_{i}(r)}{4 m_{i}(r)} - \frac{1}{4}R(g_{i})) F^2_{i} dvol_{g_i(r)}.
\label{boundlog}
\end{multline}
Because $\int_{M} (F_{i}(r))^2 dvol_{g_i(r)} =1$, by Jensen's inequality for the logarithm,
\begin{eqnarray}
\int_{M}F^2_{i}\log F_{i} dvol_{g_i(r)} &=& \frac{n-2}{4}\int_{M}F^2_{i}\log F^{\frac{4}{n-2}}_{i} dvol_{g_i(r)}
\nonumber\\ &\leq& \frac{n-2}{4}\log \int_{M}F^{2 + \frac{4}{n-2}}_{i}dvol_{g_i(r)}
= \frac{n-2}{4}\log \int_{M}F^{\frac{2n}{n-2}}_{i}dvol_{g_i(r)}.
 \label{Jensen}
\end{eqnarray}
On the other hand, we recall the following Sobolev inequality due to Hebey-Vaugon \cite{HV} (see also Theorem 5.6 in Hebey \cite{Hebey})
\begin{theorem}[Hebey-Vaugon]
\label{thm-hebey}
 For any smooth, compact Riemannian n-manifold $(M, g)$, $n\geq 3$ such that
\begin{equation*}
 \abs{Rm(g)}\leq \Lambda_{1}, \abs{\nabla_{g} Rm(g)}\leq \Lambda_{2}, inj_{(M, g)}\geq \gamma
\end{equation*}
one has a uniform constant $B(n, \Lambda_{1}, \Lambda_{2},\gamma)$ so that, for any $u\in W^{1,2} (M)$
\begin{equation}
\left(\int_{M} \abs{u}^{\frac{2n}{n-2}} dvol_{g}\right)^{\frac{n-2}{n}} \leq C(n)\int_{M}\abs{\nabla u}^2 dvol_{g} + 
B(n, \Lambda_{1}, \Lambda_{2},\gamma) \int_{M} u^2 dvol_{g}.
\label{HVI}
\end{equation}
\label{HVT}
\end{theorem}
By Perelman's noncollapsing result, Theorem \ref{HVT} applies to $(M, g_{i}(r))$ with uniform constants $\Lambda_{1},\Lambda_{2},\gamma$, independent of $r\in [-10, 0]$ and $i$. 
In particular, letting $u= 
F_{i}(r)$
in (\ref{HVI}), we find that
\begin{equation}
\int_{M} (F_{i}(r))^{\frac{2n}{n-2}} dvol_{g_i(r)} \leq C(n) \left(\int_{M}\abs{\nabla_{g_{i}(r)} F_{i}(r)}^2 dvol_{g_i(r)}\right)^{\frac{n}{n-2}} + B(n, \Lambda_{1}, \Lambda_{2},\gamma).
\label{HVF}
\end{equation}
Combining (\ref{boundlog}), (\ref{Jensen}) and (\ref{HVF}), we obtain
\begin{multline}
 \int_{M} \abs{\nabla_{g_{i}} F_{i}}^2 dvol_{g_i(r)} \leq \frac{n-2}{8 m_{i}(r) }\log \int_{M}F^{\frac{2n}{n-2}}_{i}dvol_{g_i(r)}  + 
\int_{M}(\frac{\mu_{i}(r)}{4 m_{i}(r)} - \frac{1}{4}R(g_{i})) F^2_{i} dvol_{g_i(r)}\\ \leq   \frac{n-2}{8 m_{i}(r) }\log \left( C(n) \left(\int_{M}\abs{\nabla F_{i}}^2 
dvol_{g_i(r)}\right)^{\frac{n}{n-2}} + 
B(n, \Lambda_{1}, \Lambda_{2},\gamma) \right) \\ +
\int_{M}(\frac{\mu_{i}(r)}{4 m_{i}(r)} - \frac{1}{4}R(g_{i})) F^2_{i} dvol_{g_i(r)}.
\label{global1}
\end{multline}
Recall that $R(g_{i}(r))$ is uniformly bounded by our scaling and furthermore
\begin{equation*}
 \lim_{i\rightarrow\infty} Q_{i} (T-t_{i}) = a \in [\frac{1}{8}, \infty).
\end{equation*}
Thus, if $r\in [-10, 0]$, then (\ref{global1}) gives 
a global uniform bound for $ \int_{M} \abs{\nabla_{g_{i}(r)} F_{i}(r)}^2 dvol_{g_i(r)}$. Hence, we have a global uniform bound for 
$ \int_{M} \abs{\nabla_{g_{i}(r)} \phi_{i}(r)}^2dvol_{g_i(r)}$
because $\phi_{i}(r) = c_{i}(r)F_{i}(r)$.
\end{proof}
Now, elliptic $L^p$ theory gives uniform $C^{1,\alpha}$ estimates for $\phi_i(r)$ on compact sets \cite{GT}. To get higher order derivative estimates on $\phi_i(r)$, in 
order  to 
be able to conclude that for a suitably chosen sequence of points $q_i$ around which we decide to take the limit we have $f_{\infty}(r) = -2\log \phi_{\infty}(r)$, for a 
smooth function $f_{\infty}(r)$ (where $f_{\infty}(r)$ is the limit of $f_i(r)$ and $\phi_{\infty}(r)$ is the limit of $\phi_i(r)$), it is crucial to prove 
that $\{\phi_i(r)\}$ stay uniformly bounded from below on compact sets around $q_i$. 

\h In (\ref{eq-soliton}), take $s = -10$ and $v = 0$. For each $i$, let $r_i\in [-10,0]$ be such that
\begin{eqnarray*}
& &(a-r_i)[4\pi(a-r_i)]^{-n/2}\abs{Ric(g_i(r_i)) + \nabla\nabla f(t_i+\frac{r_i}{Q_i}) - \frac{g_i}{2(a-r_i)}}^2 e^{-f(t_i+\frac{r_i}{Q_i})}\, dvol_{g_i(r_i)} \\
&\le&  (a-r)[4\pi(a-r)]^{-n/2}\abs{Ric(g_i(r)) + \nabla\nabla f(t_i+\frac{r}{Q_i}) - \frac{g_i}{2(a-r)}}^2 e^{-f(t_i+\frac{r}{Q_i})}\, dvol_{g_i(r)}, 
\end{eqnarray*}
for all $r\in [-10,0]$.
Take $q_i\in M$ at which the maximum of $\phi_i(r_i)$ over $M$ has been achieved and denote also by $(M_{\infty}, g_{\infty}(t), q)$ the smooth pointed Cheeger-Gromov 
limit of the rescaled  sequence of metrics $(M, g_i(t), q_{i})$, defined as above.  
Take any compact set $K\subset M_{\infty}$ containing $q$. Let $\psi_i : K_i \to K$ be the diffeomorphisms from the definition of Cheeger-Gromov 
convergence of $(M, g_i, q_i)$ to $(M_{\infty}, g_{\infty}, q)$ and $K_i\subset M$.
Following the previous notation, consider the functions $F_i(r_i)$, $\phi_{i}(r_{i})$ and denote them for simplicity by $F_i$ and $\phi_{i}$, respectively. 
We will also denote the metric $g_{i}(r_i)$ shortly by $g_{i}$.
\begin{lemma}
\label{lem-C1alpha}
 For any $\alpha\in (0,1)$, there is a uniform constant $C(\alpha)$ so that
\begin{equation}
\label{eq-C1alpha}
 \norm{F_{i}}_{C^{1,\alpha}(M)}\leq C(\alpha).
\end{equation}
\end{lemma}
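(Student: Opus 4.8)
The plan is to bootstrap from the elliptic equation satisfied by $F_i$, using the uniform Sobolev inequality of Theorem \ref{thm-hebey} and the uniform $W^{1,2}$ control of Lemma \ref{lem-W12}. Recall (from the proof of Lemma \ref{lem-W12}, evaluated at $r=r_i$) that $F_i=F_i(r_i)$ satisfies
\begin{equation*}
-\Delta_{g_i} F_i = a_i\, F_i\log F_i + b_i\, F_i, \qquad \int_M F_i^2\, dvol_{g_i} = 1,
\end{equation*}
with $a_i=\tfrac{1}{2m_i(r_i)}$ and $b_i=\tfrac{\mu_i(r_i)}{4m_i(r_i)}-\tfrac14 R(g_i)$, where $m_i(r)=Q_i(T-t_i)-r$ and $\mu_i(r)=\mu(g_i(r),m_i(r))+n+2\log[4\pi m_i(r)]^{n/4}$. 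First I would record that these coefficients are uniformly bounded, with $a_i$ bounded away from $0$: for $i$ large and $r_i\in[-10,0]$ one has $m_i(r_i)\in[\tfrac18,a+11]$ by (\ref{curgap}) and (\ref{eq-finite}); $R(g_i)\to 0$ by the scaling; and $\mu_i(r_i)$ is bounded because, by the scaling invariance of $\mu$, Lemma \ref{lem-mu} and the monotonicity (\ref{mon-mu}), $\mu(g(0),T)\le\mu(g_i(r_i),m_i(r_i))=\mu\big(g(t_i+\tfrac{r_i}{Q_i}),\,T-t_i-\tfrac{r_i}{Q_i}\big)\le 0$, while $\log[4\pi m_i(r_i)]^{n/4}$ is bounded. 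Note also $F_i=\phi_i/c_i>0$, with $c_i=[4\pi m_i(r_i)]^{n/4}$ bounded above and below, so $\log F_i$ makes sense.

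The key observation is that the nonlinearity is \emph{subcritical}. Fixing $\delta\in(0,\tfrac{2}{n-2})$, since $t|\log t|$ is bounded on $[0,1]$ and $t\log t\le C_\delta t^{1+\delta}$ for $t\ge1$, the uniform bounds on $a_i,b_i$ give $|a_i t\log t+b_i t|\le C(\delta)(1+t^{1+\delta})$ for all $t\ge 0$, hence $|\Delta_{g_i}F_i|\le C(1+F_i^{1+\delta})$. Now the bootstrap proceeds in the standard way. From Lemma \ref{lem-W12} (applied to $\phi_i=c_iF_i$) together with the $L^2$-constraint, $\|F_i\|_{W^{1,2}(M,g_i)}\le C$; Theorem \ref{thm-hebey}, whose hypotheses hold along $(M,g_i)$ with constants uniform in $i$ by Shi's derivative estimates and Perelman's $\kappa$-noncollapsing, then gives $\|F_i\|_{L^{2^*}(M,g_i)}\le C$, $2^*=\tfrac{2n}{n-2}$. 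Feeding $L^q$ control of $F_i$ into $|\Delta_{g_i}F_i|\le C(1+F_i^{1+\delta})$ and invoking the elliptic $L^p$ estimate in its bounded-geometry form (constant depending only on $n$, $p$, the curvature-derivative bounds and the injectivity-radius lower bound, all uniform for $g_i(r_i)$) gives $F_i\in W^{2,q/(1+\delta)}$; Sobolev embedding then raises the integrability exponent by a fixed amount (this is where $\delta<\tfrac{2}{n-2}$ enters), and finitely many iterations yield a uniform bound $\|F_i\|_{L^\infty(M)}\le C$. Once $F_i$ is uniformly bounded, the right-hand side $a_iF_i\log F_i+b_iF_i$ is uniformly bounded in $L^\infty(M)$ (as $t\log t$ is bounded on $[0,C]$), so the same elliptic $L^p$ theory gives $\|F_i\|_{W^{2,p}(M,g_i)}\le C(p)$ for every $p<\infty$, and Morrey's embedding yields (\ref{eq-C1alpha}) for every $\alpha\in(0,1)$.

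I expect the $L^\infty$ step to be the main obstacle, for a slightly subtle reason: the volumes $\vol_{g_i}(M)$ blow up (they are of order $Q_i^{n/2}$), so a naive Moser iteration that sees the volume of $M$ cannot be made uniform. The point of the pointwise bound $|a_it\log t+b_it|\le C(1+t^{1+\delta})$ is precisely that it lets the iteration run using only the uniform Sobolev constant and the $L^{2^*}$ bound, with no appeal to $\vol_{g_i}(M)$. A secondary technical point, for the same reason, is that all the elliptic estimates must be used in the form valid on manifolds of bounded geometry rather than their naive compact-manifold form, since $(M,g_i)$ has diameter and volume tending to infinity.
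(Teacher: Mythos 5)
Your proposal is correct and follows essentially the same approach as the paper's: bootstrap the elliptic equation for $F_i$, exploiting that the nonlinearity $F_i\log F_i$ is subcritical and that Lemma \ref{lem-W12} plus Theorem \ref{thm-hebey} give a uniform starting $L^{2n/(n-2)}$ bound. The paper's version is a one-line sketch (phrasing the key point as ``$F_i\in L^p$ implies $F_i\log F_i\in L^{p-\delta}$'' and citing ``standard local parabolic estimates,'' which is evidently a typo for elliptic), whereas you carry out the iteration explicitly, track the needed uniformity of $a_i,b_i$, and — usefully — flag that one must use the bounded-geometry forms of the $L^p$ and Sobolev estimates and run Moser iteration off the pointwise bound $|a_it\log t+b_it|\le C(1+t^{1+\delta})$ rather than off any bound that would see $\vol_{g_i}(M)$, which blows up. These are exactly the points the paper's terse proof leaves implicit, so your writeup is a faithful and more careful rendering of the same argument rather than a genuinely different route.
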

\begin{proof}The proof is via boostrapping and rather standard for the equation satisfied by $F_{i}$
\begin{equation}
 -\Delta_{g_{i}} F_{i} = \frac{1}{2 m_{i}(r)} F_{i}\log F_{i} + (\frac{\mu_{i}(r)}{4 m_{i}(r)} - \frac{1}{4}R(g_{i})) F_{i}.
\label{Feq}
\end{equation}
The reason that bootstrapping works is simple. If $F_{i}$ is uniformly
bounded in $L^{p}(K_{i})$, where $K_i\in M$ is a compact set, then $F_{i}\log F_{i}$ is uniformly bounded in $L^{p-\delta}(K_{i})$ for any $\delta>0$. The standard local parabolic estimates will give us (\ref{eq-C1alpha}) which will be independent of a compact set since we have uniform global $W^{1,2}$ bound on $F_i$.
\end{proof}
\h  Let us now discuss how to get higher order derivatives estimates for $F_{i}$. Covariantly differentiating (\ref{Feq}), 
commuting derivatives, and noting that
\begin{equation*}
 -\Delta_{g_{i}}\partial_{l} F_{i} = - \partial_{l}\Delta_{g_{i}} F_{i} - \ric(g_{i})_{lk} g^{kp}_{i}\partial_{p}F_{i}
\end{equation*}
we get
\begin{multline}
 -\Delta_{g_{i}}\partial_{l} F_{i} = \frac{1}{2 m_{i}(r)} \partial_{l} F_{i}\log F_{i} + (
\frac{2 + \mu_{i}(r)}{4 m_{i}(r)}- \frac{1}{4}R(g_{i})) \partial_{l}F_{i} \\- \frac{1}{4}\partial_{l}R(g_{i}) F_{i}
- \ric(g_{i})_{lk} g^{kp}_{i}\partial_{p}F_{i}.
\label{Fdeq}
\end{multline}
The major obstacle in applying $L^{p}$ theory to get uniform $C^{1,\alpha}$ estimates for $ \partial_{l}F_{i}$ is the term $\partial_{l} F_{i}\log F_{i}$.
This emanates from the potential smallness of $\abs{F_{i}}$, though we have already had a nice uniform upper bound on it. Thus, to proceed further, we need to bound
$\abs{F_{i}}$ uniformly from below. Equivalently, we will prove in Lemma \ref{lem-lower-phi} that $\phi_{i}$
stays uniformly bounded from below on $K_{i}$. \\
\h As the first step, we bound $\phi_{i}(q_{i})$ from below. This is simple. If we apply the maximum principle to (\ref{eq-f10}) we obtain 
$\min_{M} f_i \le C$, where $f_i = f_i(r_i)$, for a uniform constant $C$. This can be seen as follows. Define $\alpha_{i} = Q_{i}(T-t_{i})$. At the minimum of $f_{i}$, we have
\begin{equation*}
\frac{f_{i}-n}{\alpha_{i}-r_i} =\frac{\mu(g_{i}(r_i), \alpha_{i}-r_i)}{\alpha_{i}-r_i} - R (g_{i}(r_i)) -2\Delta_{g_{i}(r_i)} f_{i}\leq \frac{\mu (g_{i}(r_i), \alpha_{i}-r_i)}
{\alpha_{i}-r_i} -R (g_{i}(r_i)).
\end{equation*}
Thus,
\begin{multline}
\label{eq-need-sc}
f_{i}\leq n + \mu(g_{i}(r_i), \alpha_{i}-r_i) -R(g_{i}(r_i))(\alpha_{i}-r_i)\\ \leq n + \mu(g_{i}(r_i), \alpha_{i}-r_i) + \frac{C}{Q_{i}}[Q_{i}(T-t_{i}) -r_i] \leq C,
\end{multline}
where we have used the fact that $R(\cdot,t) \ge -C$ on $M$, for all $t\in [0,T)$ (see (\ref{lower-scar})). This 
implies $\phi_i(q_i) \ge \delta > 0$ for all $i$,  with a uniform constant $\delta$. 

Let $K\subset M_{\infty}$ and $K_i\subset M$ be compact sets as before. Also recall that $m_{i}(r_{i}) = Q_{i}(T-t_{i}) -r_{i}$.

\begin{lemma}
\label{lem-lower-phi}
For every compact set $K\subset M_{\infty}$ there exists a uniform constant $C(K)$ so that 
$$\phi_i\ge C(K), \,\,\, \mbox{on} \,\,\, K_i, \,\,\ \mbox{for all} \,\,\, i.$$
\end{lemma}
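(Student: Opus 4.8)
The plan is to rewrite (\ref{eq-min}) as a linear elliptic equation for $\phi_i$ with uniformly bounded coefficients, obtain uniform local higher‑order estimates, pass to a Cheeger--Gromov limit, and rule out interior zeros of the limiting function by a strong maximum principle; combined with the already established bound $\phi_i(q_i)\ge\delta$ this forces the limit to be strictly positive, which is exactly what the asserted lower bound on $K_i$ amounts to.

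First I would write (\ref{eq-min}) in the form $-\Delta_{g_i(r_i)}\phi_i=h_i$ with
\[
h_i=\frac{1}{4m_i(r_i)}\Bigl(2\phi_i\log\phi_i+\bigl(\mu(g_i(r_i),m_i(r_i))+n-m_i(r_i)R(g_i)\bigr)\phi_i\Bigr),
\]
and note that $h_i$ is \emph{uniformly} bounded: by Lemma \ref{lem-C1alpha} one has $\sup_M\phi_i\le C$ (since $\phi_i=c_i(r_i)F_i$ and, by the Type I hypothesis and (\ref{curgap}), both $m_i(r_i)=Q_i(T-t_i)-r_i$ and $c_i(r_i)$ are bounded above and away from $0$), hence $|\phi_i\log\phi_i|\le C$; the term $\mu(g_i(r_i),m_i(r_i))=\mu(g(t_i+\tfrac{r_i}{Q_i}),\,T-t_i-\tfrac{r_i}{Q_i})\in[\mu(g(0),T),0]$ by Lemma \ref{lem-mu} and (\ref{mon-mu}); and $R(g_i)\to0$. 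Since $t\mapsto t\log t$ is H\"older on bounded intervals and $\phi_i$ is uniformly $C^{\alpha}$, $h_i$ is uniformly $C^{\beta}$ on compact sets, so elliptic $L^p$/Schauder estimates give uniform $C^{2,\beta}$ bounds for $\phi_i$ on compact sets relative to the metrics $g_i(r_i)$, whose geometry is uniformly controlled (as in the extraction of the limit $g_\infty$).

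Arguing by contradiction, suppose the lemma fails: then there are a compact $K\subset M_\infty$, a subsequence, and points $x_i\in K_i$ with $\phi_i(x_i)\to0$. Passing to a further subsequence with $r_i\to r_\infty\in[-10,0]$ and $\psi_i(x_i)\to x_\infty\in K$, the uniform estimates give $\phi_i\to\phi_\infty$ in $C^2_{\mathrm{loc}}(M_\infty)$, with $\phi_\infty\ge0$, $\phi_\infty(x_\infty)=0$, $\phi_\infty(q_\infty)\ge\delta>0$, and, using $R(g_\infty)\equiv0$,
\[
-\Delta_{g_\infty}\phi_\infty=\frac{\phi_\infty\log\phi_\infty}{2(a-r_\infty)}+\frac{\mu_\infty+n}{4(a-r_\infty)}\phi_\infty,\qquad \mu_\infty:=\lim_{t\to T}\mu(g(t),T-t)\le0,
\]
the limit $\mu_\infty$ existing by (\ref{eq-mu-bound}), and $a-r_\infty>0$ since $a\ge\tfrac18$ by (\ref{curgap}).

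The crux, which I expect to be the main difficulty, is to deduce $\phi_\infty>0$ on the connected manifold $M_\infty$, contradicting $\phi_\infty(x_\infty)=0$. The obstruction is that the nonlinearity $t\mapsto\tfrac{t\log t}{2(a-r_\infty)}+\tfrac{\mu_\infty+n}{4(a-r_\infty)}t$ is not Lipschitz at $0$, so the naive strong maximum principle does not apply directly. However, writing the equation as $-\Delta_{g_\infty}\phi_\infty+\beta(\phi_\infty)=0$ with $\beta(t)=-t\bigl(\tfrac{\log t}{2(a-r_\infty)}+\tfrac{\mu_\infty+n}{4(a-r_\infty)}\bigr)$, one checks that near $t=0$, $\beta$ is nonnegative, nondecreasing, $\beta(0)=0$, and $\int_{0^+}\bigl(\int_0^s\beta\bigr)^{-1/2}\,ds=+\infty$ (because $\int_0^s\beta(t)\,dt\sim\tfrac{s^2|\log s|}{4(a-r_\infty)}$ and $\int_{0^+}\tfrac{ds}{s\sqrt{|\log s|}}=+\infty$); a strong maximum principle of V\'azquez type then shows that a nontrivial nonnegative solution cannot vanish at an interior point, so $\phi_\infty>0$ everywhere. (Equivalently: boundedness of $h_i$ and the Harnack inequality with forcing $\sup_{B_\rho(x_\infty)}|h_\infty|\to0$ as $\rho\to0$ show $\phi_\infty$ vanishes to infinite order at any zero, so the potential $\tfrac{\log\phi_\infty}{2(a-r_\infty)}+\tfrac{\mu_\infty+n}{4(a-r_\infty)}$ grows at most like a power of $\log(1/\mathrm{dist})$ and lies in $L^p_{\mathrm{loc}}$ for all $p<\infty$; unique continuation then forces $\phi_\infty\equiv0$ near that zero, a contradiction.) Finally, since $\phi_\infty>0$ contradicts $\phi_\infty(x_\infty)=0$, for each compact $K$ there is $C(K)>0$ with $\phi_i\ge C(K)$ on $K_i$ for all large $i$; decreasing $C(K)$ if necessary to lie below the (positive) minima of $\phi_i=e^{-f_i/2}$ over the remaining finitely many compact sets $\overline{K_i}$ completes the proof.
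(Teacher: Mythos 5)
Your proposal is correct and follows essentially the same route as the paper: argue by contradiction, pass to a limit $\phi_\infty\ge 0$ with $\phi_\infty(x_\infty)=0$ and $\phi_\infty(q)\ge\delta>0$, and show the zero set is open using the logarithmic structure of the nonlinearity, contradicting connectedness. Where the paper delegates the key positivity step to Rothaus's argument, you make it self-contained via a V\'azquez-type strong maximum principle together with a verification of the Osgood condition for $t\mapsto t\log t$ near $0$, and you pass to the limit in the strong form after establishing uniform $C^{2,\beta}$ bounds rather than in the weak form as the paper does — both are sound and lead to the same conclusion.
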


\begin{proof}
Assume the lemma is not true and that there exist points $P_i\in K_i$ so that $\phi_i(P_i) \le 1/i \to 0$ as $i\to\infty$. Assume 
$\psi_{i}(P_i)$ converge to a point $P\in K$. Then $\phi_{\infty}(P) = 0$. Take a smooth function $\eta\in C_0^{\infty}(M_{\infty})$, compactly supported in 
$K\backslash \{P\}$. Then $\psi_{i}^*\eta \in C_0^{\infty}(M)$, compactly supported in $K_i\backslash\{P_i\}$. Multiplying (\ref{eq-min}) by $\psi_{i}^*\eta$, assuming $\lim_{i\to\infty} r_i = r_0$, and 
then integrating by parts, we get
$$\int_M m_{i}(r_{i})\cdot(4\nabla\phi_i\nabla (\psi_{i}^*\eta) + R_i\phi_i \psi_{i}^*\eta) - 2\phi_i \psi_{i}^*\eta\ln\phi_i - n\phi_i \psi_{i}^*\eta - 
\mu(g_i, m_{i}(r_{i}))\phi_i \psi_{i}^*\eta)\, dvol_{g_{i}(r_{i})} = 0.$$
Letting $i\to\infty$, using that $\phi_i\stackrel{C^{1,\alpha}}{\to} \phi_{\infty}$ locally, $\psi_i^*\eta \to \eta$ smoothly 
as $i\to \infty$, $\lim_{i\to\infty} R(g_i) = 0$, and $a-r_{0} :=\lim_{i\rightarrow \infty}m_{i}(r_{i}) \equiv 
\lim_{i\to\infty}\left(Q_i(T-t_i) -r_{i}\right)< \infty$, one finds that
$$\int_{M_{\infty}}(4(a - r_0) \nabla\phi_{\infty}\nabla \eta   - 2\eta\phi_{\infty} \ln\phi_{\infty} - n\phi_{\infty} \eta - \mu(g_{\infty}, a - r_0)
 \eta \phi_{\infty})\, dvol_{g_{\infty} (r_{0})} = 0.$$
Proceeding in the same manner as in Rothaus \cite{Rothaus} we can get that $\phi_{\infty} \equiv 0$ in some small ball around $P$. Using 
the connectedness argument, $\phi_{\infty} \equiv 0$ everywhere in $M_{\infty}$. That contradicts $\phi_{\infty}(q) \ge \delta > 0$.
\end{proof}

Having Lemma \ref{lem-lower-phi} and $C^{1,\alpha}$ uniform estimates on $\phi_i$, we see that the right hand side of (\ref{Fdeq}) is uniformly
bounded in $L^{2}(K_{i})$. Because $\log F_{i}$ is uniformly bounded on $K_{i}$, we can bootstrap (\ref{Fdeq}) to obtain $C^{1,\alpha}$ estimates for 
$\abs{\nabla _{g_{i}} F_{i}}$. Hence, one has uniform $C^{2,\alpha}$ estimates for $F_{i}$ on $K_{i}$. In terms of $\phi_{i}$, one has that
\begin{equation}
\label{eq-phii100}
 \norm{\phi_{i}}_{C^{2,\alpha}(K_{i})}\leq C(K, \alpha) (Q_i(T-t_i) - r_i)^{n/4}.
\end{equation}
One can differentiate (\ref{Fdeq}) again and obtain all higher order derivative estimates on $\phi_i$ and therefore all higher order derivative estimates on 
$f_i = f_{i}(r_i) = -2\log\phi_i$. However, for our purpose, $C^{2,\alpha}$ estimates suffice.\\
\h Then, using (\ref{eq-soliton}), for $s = -10$ and $v = 0$,  
\begin{eqnarray*}
& &\lim_{i\to\infty}10(a - r_i)(4\pi (a - r_i))^{-n/2}\int_M |\ric(g_i(r_i)) + \nabla\nabla f_i - \frac{g_i(r_i)}{2(a-r_i)}|^2 e^{-f_i}\, dvol_{g_{i}(r_i)} \\
&\le& \lim_{i\to\infty} \int_{-10}^0 \int_M (a-r)[4\pi(a-r)]^{-n/2} |\ric(g_i) + \nabla\nabla f_i - \frac{g_i}{2(a-r)}|^2 e^{-f_i}\, dvol_{g_{i}(r)}\, dr = 0.
\end{eqnarray*}
By Lemma \ref{lem-lower-phi} and (\ref{eq-soliton}), applying Arzela-Ascoli theorem on $f_i$ results in}
$$\ric_{\infty} + \nabla\nabla f_{\infty} - \frac{g_{\infty}}{2(a-r_0)} = 0.$$
Since $\ric_{\infty} \equiv 0$, we get
$$g_{\infty} = 2(a-r_0)\nabla\nabla f_{\infty},$$
and therefore $M_{\infty}$ is isometric to a standard Euclidean space $\mathbb{R}^n$; see, e.g., Proposition 1.1 in \cite{Ni}. It is now easy to see that 
\begin{equation}
\label{eq-gaussian}
f_{\infty} = \frac{|x|^2}{4(a-r_{0})},
\end{equation}
that is, the limiting manifold $(\mathbb{R}^n, g_{\infty}, q_{\infty})$ is a Gaussian shrinker.
\end{proof}

In Corollary \ref{cor-vol} we claim that if our solution is Type I and if it has uniformly bounded scalar curvature, then the volume can not go to zero.

\begin{proof}[Proof of Corollary \ref{cor-vol}]
Inspecting the proof of Theorem \ref{prop-typeI} and noting that our solution $g(t)$ is of type I, we see that, if we rescale the metrics by $g_{j} =
Q_{j} g(T +\frac{t}{Q_{j}})$ then we also get a Gaussian shrinker.
Now, using Perelman's pseudolcality theorem (\cite{Pe}, Theorem 10.3) and Theorem \ref{prop-typeI} we get
$$|\rem|_{g(t)} \le \frac{Q_j}{(\e r)^2}, \,\,\, \mbox{in} \,\,\, B_{g(T-\frac{(\e r)^2}{Q_{j}})}(q_j,\frac{\e r}{\sqrt{Q_j}}),$$
for all $t\in [T-\frac{(\e r)^2}{Q_{j}}, T)$ and all $j \ge j_0$, for sufficiently large $j_0$. Here $r$ is arbitrary and $\e$ is a small
number in Perelman's pseudolocality theorem. This tells us all metrics $g(t)$, for $t \ge T-\frac{(\e r)^2}{Q_{j_{0}}}$ are uniformly equivalent to each other in 
a fixed ball $B_{g(T-\frac{(\e r)^2}{Q_{j_{0}}})}(q_{j_0}, \frac{\e r}{Q_{j_0}})$ and therefore,
$$\vol_{g(t)}(M) \ge \vol_{g(t)}  (B_{g(T-\frac{(\e r)^2}{Q_{j_{0}}})}(q_{j_0}, \frac{\e r}{Q_{j_0}})) \ge  C(j_0) > 0.$$
\end{proof}

\begin{proof}[Proof of  Theorem \ref{lem-typeII}]
We will use many estimates and arguments that we have developed in the proof of Theorem \ref{prop-typeI}. Assume the flow does develop a type II singularity at $T$. Then we can pick a sequence of 
times $t_i\to T$ and points $p_i \in M$ as in \cite{Ha} so that the rescaled sequence of solutions $(M, g_i(t) := Q_i g(t_i + t/Q_i), p_i)$,  converges in a 
pointed Cheeger-Gromov sense to a Ricci flat, nonflat, complete, eternal solution $(M_{\infty}, g_{\infty}(t),p_{\infty})$. Here 
$Q_i := \max_{M\times[0,t_i]}|\rem|(x,t) \to \infty$ as $i\to\infty$. The reasons for getting Ricci flat metric are the same as in the proof of Theorem \ref{prop-typeI}.
Define
\begin{equation*}\alpha_i := (T-t_i)Q_i.
 \end{equation*}
Since we are assuming type II singularity occurring at $T$, we may assume that for a chosen sequence $t_i$ we have $\lim_{i\to\infty}\alpha_i = \infty$.\\
\h By Lemma \ref{lem-mu}  and  the monotonicity of $\mu$ we have $|\mu(g(t), T-t)| \le C$ for all $t\in [0,T)$.  Let $f_i(\cdot, s)$ be a smooth minimizer realizing  
$$\mu(g(t_i +s/Q_i), T-t_i - s/Q_i) = \mu(g_i(s), \alpha_i - s) =\inf \mathcal{W}(g(t_{i} + \frac{s}{Q_{i}}), f, T- t_{i} - \frac{s}{Q_{i}} )$$ over the set of all smooth functions $f$ satisfying 
\begin{equation*}
 [4\pi (T-t_{i} - \frac{s}{Q_{i}})]^{-n/2}\int_{M} e^{-f} dvol_{g(t_{i} + \frac{s}{Q_{i}})} =1.
\end{equation*}
  Then $f_i = f_i(\cdot,s)$ satisfies
\begin{equation}
\label{eq-f}
2\Delta_{g_i(s)} f_i - |\nabla_{g_{i}(s)} f_i|^2 + R_i + \frac{f_i - n}{\alpha_i - s} = \frac{\mu (g_{i}(s), \alpha_{i}-s)}{\alpha_i - s}.
\end{equation}
In terms of $\phi_i(x,s) = e^{-f_i(x,s)/2}$ this is equivalent to
\begin{equation}
\label{eq-u}
-4\Delta_{g_i(s)} \phi_i(s) + R (g_{i}(s))\phi_i(s) = \frac{2\phi_i(s)\log\phi_i(s)}{\alpha_i - s} + \frac{(\mu (g_{i}(s), \alpha_{i}-s) + n)\phi_i(s)}{\alpha_i - s},
\end{equation}
with
\begin{equation}
\label{eq-constraint}
\int_M (\phi_i(s))^2\, dvol_{g_i(s)} = (4\pi(\alpha_i - s))^{n/2}.
\end{equation}
In what follows, we fix $s=0$.
Define $\tilde{\phi}_i(\cdot) := \frac{\phi_i(\cdot,0)}{\beta_i}$, where 
\begin{equation}\beta_i:= \max_{M} \left(\phi_i(x,0) + \abs{\nabla_{g_{i}(0)} \phi_{i}(x, 0)}\right).
\label{betaeq} 
\end{equation}
This choice of $\beta_{i}$ gives us uniform $C^{1}$ estimates for $\tilde{\phi_{i}}$ on $M$. Thus, we can apply $L^{p}$ theory to get 
uniform  $C^{1,\alpha }$ estimates 
for $\tilde{\phi}_i$ on compact sets around the points where the maxima in (\ref{betaeq}) are achieved. To be more precise, we proceed as follows.\\
\h Take $q_i\in M$ at which this maximum in (\ref{betaeq}) has been achieved and denote also by $(M_{\infty}, g_{\infty}(t), q)$ the smooth pointed Cheeger-Gromov limit of the rescaled 
 sequence of metrics $(M, g_i(t), q_{i})$, defined as above. Lemma \ref{lem-W12}, Theorem \ref{thm-hebey} and standard elliptic $L^{p}$ estimates applied to (\ref{eq-u})  yield  
the estimates on $\beta_i$ in terms of the $W^{1,2}$ norm of $\phi_i$, with respect to metric $g_i(0)$, that is, there exists a uniform constant $C$ so that for all $i$,
$\beta_i \le C\alpha_i^{n/4}$, which implies 
\begin{equation}
\label{eq-beta}
\log\beta_i \le C_2\log\alpha_i + C_2,
\end{equation}
for some uniform constants $C_1$ and $C_2$. This can be proved  the same way  we obtained (\ref{eq-phii100}) in Theorem \ref{prop-typeI}. After dividing (\ref{eq-u}) by $\beta_i$ we get
\begin{equation}
\label{eq-phii}
-4\Delta_{g_i(0)}\tilde{\phi}_i + R(g_{i}(0))\tilde{\phi_i} = 2\tilde{\phi}_i\cdot\frac{\log\tilde{\phi}_i + \log\beta_i}{\alpha_i } + 
\frac{(\mu (g(t_i), T-t_i)+ n)\tilde{\phi}_i}{\alpha_i }.
\end{equation}
Since $(M, g_{i}(t), q_{i})$ converges in the pointed Cheeger-Gromov sense to $(M_{\infty}, g_{\infty}(t), q)$, and $\norm{\tilde{\phi_{i}}}_{C^{1}(M, g_{i}(0))}$
is uniformly bounded, we can get uniform  $C^{1,\alpha }$ estimates 
for $\tilde{\phi}_i$ on compact sets around points $q_i$. By Arzela-Ascoli 
theorem $\tilde{\phi}_i$ converges uniformly in the $C^1$ norm on compact sets around points $q_i$ to a smooth function $\tilde{\phi}_{\infty}$. 
We will show below that $\tilde{\phi}_{\infty}(\cdot)$ is a positive constant.\\
\h Indeed, if we apply the maximum principle to (\ref{eq-f}), similarly as in the proof of Theorem \ref{prop-typeI}, we obtain $\min_{M} f_i(\cdot, 0) \le C$, for a uniform constant $C$.  This 
implies $\log \beta_i \ge -C_1$ for a uniform constant $C_1$. 
In particular, there is a uniform constant $\delta>0$ such that for all $i$, one has
\begin{equation}
\beta_{i}\geq \delta>0.
 \label{eq-del}
\end{equation}
This together with (\ref{eq-beta}) and the $\lim_{i\to\infty}\alpha_i = \infty$ implies 
\begin{equation}
\label{eq-prop}
\lim_{i\to\infty}\frac{\log\beta_i}{\alpha_i} = 0.
\end{equation}
If we multiply (\ref{eq-phii}) by any cut off function $\eta_i = \psi_{i}^*\eta$ (where $\eta$ is any cut off function on $M_{\infty}$ and $\psi_i$ is a sequence of 
diffeomorphisms from the definition of Cheeger Gromov convergence) and integrate by parts we get
\begin{multline*}4\int_M\nabla\tilde{\phi}_i\nabla \eta_i\, dvol_{g_{i}(0)} = -\int_M R(g_{i}(0))\tilde{\phi}_i \eta_i\, dvol_{g_{i}(0)} \\ + 
2\int_{M}\eta_i\tilde{\phi}_i\cdot\frac{\log\tilde{\phi}_i  + \log\beta_i}{\alpha_i }\, dvol_{g_{i}(0)} -  \frac{\mu(g(t_i), T-t_i) + n}{\alpha_i }\int_M \eta_i\tilde{\phi}_i\, 
dvol_{g_{i}(0)}.
\end{multline*} 
If we let $i\to\infty$ in the previous identity, using (\ref{eq-prop}), the 
$\lim_{i\to\infty}\alpha_i = \infty$, $R(g_i(0)) \to 0$ uniformly on compact sets, $\tilde{\phi}_i\stackrel{C^{1}}{\to} \tilde{\phi}_{\infty}$, and uniform bounds on $\mu(g(t), T-t)$ we obtain
$$\int_M \nabla\tilde{\phi}_{\infty}\nabla\eta\, dvol_{g_{\infty}(0)} = 0.$$
This means $\Delta\tilde{\phi}_{\infty} = 0$ in the distributional sense. By Weyl's theorem, $\tilde{\phi}_{\infty}$ is a harmonic function on $M_{\infty}$. Since
 $(M_{\infty}, g_{\infty}(0))$ is 
a complete, Ricci flat manifold and $\phi_{\infty} \ge 0$, by the theorem of Yau \cite{Yau}, $\tilde{\phi}_{\infty} = C_{\infty}$ is a 
constant function on $M_{\infty}$. {At the same time, from the definition of $\tilde{\phi_{i}}$, we get, for $x$ in compact sets around 
points $q_i$
\begin{equation}
1 = \lim_{i\rightarrow \infty}\left(\tilde{\phi_i}(x) + \abs{\nabla_{g_{i}(0)}\tilde{ \phi_{i}}(x)}\right) = \tilde{\phi}_{\infty}(x) + \abs{\nabla_{g_{\infty}(0)} 
\tilde{\phi}_{\infty}(x)}\\ \equiv C_{\infty} .
\end{equation}
This implies, in particular $C_{\infty}\equiv 1>0.$\\
 }

\end{proof}

\section{Integral bounds on scalar curvature}
\label{sec-cor}

In this section we will prove Theorem \ref{int-bound} and Theorem \ref{int-bound1}. Observe that Theorem \ref{thm-main} is a special case of Theorem \ref{int-bound} 
when $\alpha = \infty$ in the case we deal with type I singularities only. A crucial ingredient in our arguments is the following result.
\begin{theorem} (Enders-M\"{u}ller-Topping, Theorem 1.4 \cite{EMT})
Let $g(t)$ be the solution to a Type I Ricci flow (\ref{eq-RF}) on $[0, T)$ and suppose that the flow develops a Type I singularity at $T$. 
Then for every sequence $\lambda_j\to\infty$, the
rescaled Ricci flows $(M,g_j(t))$ defined on $[-\lambda_jT,0)$ by
$g_j(t):=\lambda_j g(T+\frac{t}{\lambda_j})$ subconverge, in the Cheeger-Gromov sense, to a
normalized nontrivial gradient shrinking soliton in canonical form on $(-\infty, 0)$.
\label{EMT-thm}
\end{theorem}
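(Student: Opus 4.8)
The plan is to anchor Perelman's reduced volume monotonicity at the singular time and feed it into Hamilton's compactness theorem, in the spirit of Enders--M\"uller--Topping \cite{EMT}. The first step is to record the two uniform geometric bounds that make the time-rescaled family $(M,g_j(t))$, $t\in[-\lambda_j T,0)$, precompact on $(-\infty,0)$. From the Type I hypothesis (\ref{TypeI-flow}) one has $|\rem(g_j)|(x,t)=\lambda_j^{-1}|\rem(g)|(x,T+t/\lambda_j)\le C_0/|t|$, so on every compact subinterval $[a,b]\subset(-\infty,0)$ the curvature of $g_j$ is bounded independently of $j$ and of $x\in M$; and Perelman's $\kappa$-noncollapsing \cite{Pe}, being scale invariant, holds for each $g_j(t)$ below every scale with one fixed $\kappa$. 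Hence for any choice of basepoints $q_j\in M$, Hamilton's compactness theorem \cite{Ha2} produces a subsequence converging in the pointed Cheeger--Gromov sense to a complete ancient solution $(M_\infty,g_\infty(t),q_\infty)$ on $(-\infty,0)$.

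To identify the limit as a soliton I would bring in the reduced distance based at the singular time. Fix a Type I singular point $p_\ast\in M$, i.e.\ a point with $\limsup_{t\to T}(T-t)|\rem|(p_\ast,t)>0$; such a point exists because the blow-up rate (\ref{curgap}) is realized along some sequence $(x_k,t_k)$ with $t_k\to T$, and a point-selection argument upgrades this to a fixed spatial point. Following \cite{EMT}, for a Type I flow the reduced distance $\ell=\ell_{(p_\ast,T)}$ and reduced volume $\tilde V_{(p_\ast,T)}(\tau)=\int_M(4\pi\tau)^{-n/2}e^{-\ell(\cdot,\tau)}\,dvol_{g(T-\tau)}$ based at the singular-time point $(p_\ast,T)$ are well defined on $(0,T]$, with uniform local bounds on $\ell$, $|\nabla\ell|$ and $|\partial_\tau\ell|$ after rescaling; one uses these bounds to pick $q_j$ where the rescaled reduced distance is controlled (e.g.\ $\le n/2$ at $\tau=1$, which always occurs by Perelman's pointwise estimate) so that $\ell$ and the soliton potential can be carried to the limit together with the metrics.

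The soliton structure then comes from the rigidity case of reduced volume monotonicity. Since the reduced volume is scale invariant, for each fixed $\sigma>0$ one has $\tilde V^{g_j}_{(p_\ast,0)}(\sigma)=\tilde V^{g}_{(p_\ast,T)}(\sigma/\lambda_j)$, and by monotonicity in $\tau$ the right-hand side converges as $j\to\infty$ to $\Theta:=\lim_{\tau\to0^+}\tilde V^{g}_{(p_\ast,T)}(\tau)\in(0,1]$, the Gaussian density at $p_\ast$. Passing to the Cheeger--Gromov limit using the uniform $\ell$-estimates yields $\tilde V^{g_\infty}(\sigma)\equiv\Theta$, constant in $\sigma$; Perelman's equality case then forces $(M_\infty,g_\infty(t))$ to be a gradient shrinking soliton in canonical form on $(-\infty,0)$, with potential $f_\infty=\ell_\infty$, and after adding a constant to $f_\infty$ it is normalized. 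I would also note that, in the spirit of this paper's other results, one could instead track Perelman's $\mu$-energy along the rescaling and use Lemma \ref{lem-mu} together with the minimizer estimates developed here, which should give the soliton structure as well.

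For nontriviality, observe that since $p_\ast$ is a Type I singular point, Perelman's pseudolocality theorem \cite{Pe} excludes $\Theta=1$: if the density at $p_\ast$ were $1$, pseudolocality would give a uniform curvature bound in a space-time neighborhood of $(p_\ast,T)$, contradicting the blow-up of $|\rem|$ there. Hence $\Theta<1$; since the flat Gaussian $\RR^n$ is the only complete gradient shrinking soliton in canonical form with reduced volume identically $1$, the limit cannot be flat, so it is a nontrivial normalized gradient shrinking soliton in canonical form. The main obstacle, and the technical core of the argument, is making the reduced distance based at the singular time rigorous and extracting the uniform local estimates on $\ell_{(p_\ast,T)}$ from the Type I bound alone, and then passing the reduced volume to the Cheeger--Gromov limit; this is exactly where the Type I hypothesis is indispensable, while the remaining steps are scale invariance, Hamilton's compactness, and Perelman's rigidity and pseudolocality.
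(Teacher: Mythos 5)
The paper does not prove this theorem; it is cited verbatim as Theorem~1.4 of Enders--M\"uller--Topping \cite{EMT} and used as a black box in the proofs of Theorems~\ref{int-bound} and \ref{int-bound1}. So there is no ``paper's own proof'' to compare against, and your task here amounts to reconstructing the EMT argument. Your sketch does capture its genuine structure: Type~I scaling together with Perelman's $\kappa$-noncollapsing feeds Hamilton's compactness theorem to produce a pointed ancient limit; the singular-time reduced distance and reduced volume (whose very existence and uniform estimates under the Type~I hypothesis are a main technical contribution of \cite{EMT}, building on Naber \cite{Na}) pass to the limit; constancy of the limiting reduced volume then forces the gradient shrinking soliton equation; and pseudolocality rules out $\Theta=1$ at a Type~I singular point, giving nontriviality.

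Two cautionary remarks. First, the statement as copied in this paper omits the basepoint: EMT Theorem~1.4 is a pointed blow-up centered at a fixed Type~I singular point $p_\ast$, and the entire reduced-volume argument is anchored there; your language about ``choosing $q_j$ where the rescaled reduced distance is $\le n/2$'' mixes this up with the point-selection one uses in other parts of the theory. Second, the closing suggestion that Perelman's $\mu$-energy together with Lemma~\ref{lem-mu} ``should give the soliton structure as well'' is not interchangeable with the EMT route: the $\mu$-functional argument carried out in this paper (Theorem~\ref{prop-typeI}) crucially uses the extra hypothesis $\sup_M|R|\le C$ to control the minimizers $\phi_i$ and extract a limit, whereas EMT need only the Type~I bound. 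You also pass lightly over the step where $\tilde V^{g_\infty}(\sigma)\equiv\Theta$ forces the soliton equation: the limiting function $\ell_\infty$ is a priori only a barrier/subsolution for the limit's reduced distance, and making the rigidity argument work requires the differential inequalities that $\ell_\infty$ inherits in the weak sense, not just the pointwise constancy of $\tilde V$. These are exactly the technical points you flag at the end as ``the main obstacle,'' so your assessment of where the real work lies is accurate.
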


\begin{proof}[Proof of Theorem \ref{int-bound}]
The proof is by contradiction. Assume the flow develops a type I singularity at $p\in M$ at $T < \infty$. Consider any sequence $\lambda_{j}\rightarrow \infty$ and
define $g_{j}(t): =\lambda_{j} g(T+\frac{t}{\lambda_{j}})$ where $t\in [-\lambda_{j}T, 0)$.  Then,
by Theorem \ref{EMT-thm}, the rescaled Ricci flows $(M,g_j(t), p)$ defined on $[-\lambda_jT,0)$ subconverge, in the Cheeger-Gromov sense, to a
normalized nontrivial gradient shrinking soliton $(M_{\infty}, g_{\infty}(t), p_{\infty})$ in canonical form on $(-\infty, 0)$.
Under the condition (\ref{int-alpha}), one has
\begin{equation*}
 \int_{M} \abs{R(g_{j}(t))}^{\alpha}dvol_{g_{j}(t)} = \frac{1}{\lambda_{j}^{\alpha-n/2}}\int_{M}\abs{R(g(T + \frac{t}{\lambda_{j}}))}^{\alpha} dvol_{ g(T + \frac{t}{\lambda_{j}})}
\leq \frac{C_{\alpha}}{\lambda_{j}^{\alpha-n/2}}\rightarrow 0.
\end{equation*}
\label{scar-flat}
Thus our limiting solution $(M_{\infty}, g_{\infty}(t), p_{\infty})$ is scalar flat. 
Arguing as in the proof of Theorem \ref{thm-main}, we see that $M_{\infty}$ is isometric to a 
standard Euclidean space $\mathbb{R}^n$. However, this contradicts the nontriviality of $M_{\infty}$.
\end{proof}

\begin{proof}[Proof of Theorem \ref{int-bound1}]  By H\"{o}lder inequality, it suffices to consider the case $\alpha =\frac{n +2}{2}$. Then
our integral bound is invariant under the usual parabolic scaling of the Ricci flow. 

The proof is by contradiction. Assume the flow develops a type I singularity at $p\in M$ at $T < \infty$. Consider any sequence $\lambda_{j}\rightarrow \infty$ and
define $g_{j}(t): =\lambda_{j} g(T+\frac{t}{\lambda_{j}})$ where $t\in [-\lambda_{j}T, 0)$.  Then,
by Theorem \ref{EMT-thm}, the rescaled Ricci flows $(M,g_j(t), p)$ defined on $[-\lambda_jT,0)$ subconverge, in the Cheeger-Gromov sense, to a
normalized nontrivial gradient shrinking soliton $(M_{\infty}, g_{\infty}(t), p_{\infty})$ in canonical form on $(-\infty, 0)$. Observe that  
\begin{equation*}
 \int_{-1}^0\int_M |R(g_j(t)|^{\alpha}\, dvol_{g_j(t)}\, dt = \int_{T-\frac{1}{\lambda_j}}^{T}\int_M |R(g(s))|^{\alpha}\, dvol_{g(s)}\, ds
\end{equation*}
Since $\int_0^T\int_M |R(g(t))|^{\alpha}\, dvol_{g(t)}\, dt < \infty$,  letting $j\rightarrow\infty$, we obtain
\begin{equation}
\int_{-1}^0\int_{M_{\infty}} |R(g_{\infty}(t)|^{\alpha} \, dvol_{g_{\infty}(t)}\, dt \le \lim_{j\to\infty}
\int_{T-\frac{1}{\lambda_j}}^{T}\int_M |R(g(s))|^{\alpha}\, dvol_{g(s)}\, ds = 0,
\label{intbound_infty}
\end{equation}
which implies $R(g_{\infty}(t)) \equiv 0$ on $M_{\infty}$, for $t\in [-1,0]$.
Thus our limiting solution 
$(M_{\infty}, g_{\infty}(t))$ is scalar flat. Arguing as in the proof of Theorem \ref{thm-main}, we see that $M_{\infty}$ is isometric to a 
standard Euclidean space $\mathbb{R}^n$. However, this contradicts the nontriviality of $M_{\infty}$.
\end{proof}

\end{document}